\DeclareFontFamily{OML}{cyi}{} \DeclareFontShape{OML}{cyi}{m}{n}{
  <5> <6> <7> <8> <9> gen * wncyi
  <10> <10.95> <12> <14.4> <17.28> <20.74> <24.88> wncyi10
 }{}
\DeclareSymbolFont{rusletters}{OML}{cyi}{m}{n}
\DeclareSymbolFontAlphabet{\rusmath}{rusletters}
\DeclareMathSymbol\re{\rusmath}{rusletters}{"03}
\def\cprime{\/{\mathsurround=0pt$'$}}
\DeclareMathOperator{\sym}{sym}
\DeclareMathOperator{\Hom}{Hom}
\DeclareMathOperator{\id}{id}
\newcommand*{\eval}[1]{\left.#1\right|}
\newcommand{\ldb}{[\![}
\newcommand{\rdb}{]\!]}
\let\kappa=\varkappa
\let\phi=\varphi
\newcommand*{\sd}[2]{\{\,#1\mid#2\,\}}
\newcommand*{\SB}[2]{\ldb{#1},{#2}\rdb^{\mathrm{s}}}
\newcommand*{\NB}[2]{\ldb{#1},{#2}\rdb^{\mathrm{n}}}
\newcommand*{\RB}[2]{\ldb{#1},{#2}\rdb^{\mathrm{r}}}
\renewcommand{\kappa}{\varkappa}
\renewcommand{\phi}{\varphi}
\newcommand{\In}{\mathrm{i}}
\newcommand{\Ld}{\mathrm{L}}
\DeclareMathOperator{\Diff}{Diff}
\DeclareMathOperator{\Char}{char}
\DeclareMathOperator{\gr}{gr}
\DeclareMathOperator{\Can}{Can}
\DeclareMathOperator{\Ham}{Ham}
\journalname{Acta Applicandae Mathematicae}
\begin{document}

\title{ALGEBRAIC THEORIES OF BRACKETS AND RELATED (CO)HOMOLOGIES\thanks{This
    work was supported in part by the NWO--RFBR grant 047.017.015,
    RFBR--Consortium E.I.N.S.T.E.I.N.\ grant 06-01-92060 and RFBR--CNRS grant
    08-07-92496.}} 

\titlerunning{ALGEBRAIC THEORIES OF BRACKETS}

\author{I.~Krasil{\cprime}shchik}

\authorrunning{I.~Krasil{\cprime}shchik}

\institute{Iosif Krasil{\cprime}shchik \at Independent University of Moscow,
  B. Vlasevsky~11,
  119002 Moscow,  Russia\\
  \email{josephk@diffiety.ac.ru}}

\date{\ }

\maketitle

\begin{abstract}
  A general theory of the Fr\"olicher--Nijenhuis and Schouten--Nijenhuis
  brackets in the category of modules over a commutative algebra is
  described. Some related structures and (co)homology invariants are
  discussed, as well as applications to geometry.
  \keywords{Fr\"olicher--Nijenhuis bracket \and Schouten--Nijenhuis bracket
    \and Poisson structures \and Integrability \and Nonlinear differential
    equations \and Hamiltonian formalism \and Algebraic approach}
  \PACS{02.10.Hh \and 02.30.Jr \and 02.30.Ik} \subclass{58J10 \and 58H15 \and
    37K10}
\end{abstract}

\section{Introduction}
\label{sec:introduction}

Bracket structures play an important role in classical differential geometry
(see, for example, Refs.~\cite{F-N-1,F-N-2,S-N-3,R-N-1,R-N-2,S-N-1,S-N-2}),
Poisson geometry (e.g., \cite{Lich}), and the theory of integrable systems
(Refs.~\cite{K-K-V-lstar,KKVV,JK:SomeNew}). Being initially of a geometrical
nature, these brackets found exact counterparts in abstract algebra, in the
framework of Vinogradov's theory of algebraic differential
operators,~\cite{Vin:AlgLog} (see also book~\cite{KLV}). It became clear that
many geometrical constructions (such as the ones we meet in Hamiltonian
mechanics or in partial differential equations; cf.~with Refs.~\cite{WhatIsHF}
and~\cite{JK:FlConn}, respectively) may be more or less exactly expressed
using the language of commutative algebra.

In this paper, I collected together my old results on the algebraic theory of
the Fr\"olicher--Nijenhuis and Schouten--Nijenhuis brackets and related
homological and cohomological theories (for shortness, I call these brackets
Nijenhuis and Schouten ones). These results were initially published in
papers~\cite{JK:FlConn,JK:SchBr,JK:SomeNew,JK:HamCoh}. The results exposed
below are easily generalized to the case of super-commutative algebras
(see~\cite{JK:SupCan}) and, being slightly modified, can be incorporated in
Lychagin's ``colored calculus'' (see~Ref.\cite{Lych:Colored}).

To simplify exposition, I shall always assume that the algebra~$A$ is such
that the module~$\Lambda^1(A)$ (see Sec.~\ref{sec:algebr-calc-diff-1}) of
$1$-forms is projective and of finite type.

\section{A general scheme}
\label{sec:general-scheme}

This scheme was first presented in Ref.~\cite{Vin:Unification}.

Let~$\Bbbk$ be a field, $\Char\Bbbk\neq2$. Let
also~$\mathbb{P}=\sum_{k\in\mathbb{Z}}\mathbb{P}_k$,
$\mathbb{Q}=\sum_{k\in\mathbb{Z}}\mathbb{Q}_k$ be graded vector spaces
and~$\mathbb{Q}$ be endowed with a differential
\begin{equation*}
  d\colon\mathbb{Q}_k\to\mathbb{Q}_{k+1},\qquad d^2=0.
\end{equation*}
Assume that there exists a graded monomorphism
\begin{equation*}
  \phi\colon\mathbb{P}\to\Hom_\Bbbk^{\gr}(\mathbb{Q},\mathbb{Q}),\qquad
  \phi(\mathbb{P}_\alpha)\subset\Hom_\Bbbk^{\alpha+\beta}(\mathbb{Q},\mathbb{Q}),
  \quad  \beta=\gr\phi,
\end{equation*}
and define the ``\emph{Lie derivative}''
\begin{equation*}
  \Ld_p^\phi=[d,\phi(p)],\qquad p\in\mathbb{P}.
\end{equation*}
Here and everywhere below~$[\cdot\,,\cdot]$ denotes the graded commutator. If
we are lucky then for two elements~$p\in\mathbb{P}_\alpha$,
$p'\in\mathbb{P}_{\alpha'}$ we can define their \emph{$\phi$-bracket} by
\begin{equation*}
  \Ld_{\ldb p,p'\rdb}^\phi
  =[\Ld_p^\phi,\Ld_{p'}^\phi],\qquad
  \ldb p,p'\rdb\in\mathbb{P}_{\alpha+\alpha'+\beta+1}.
\end{equation*}
In some interesting cases we are lucky indeed.

\section{Algebraic calculus}
\label{sec:algebraic-calculus}

Let us introduce the basic notions of the calculus over commutative algebras
that will be needed below (see Refs.\cite{JK:ConciseIntr,KLV,Vin:AlgLog} for details).

\subsection{Differential operators}
\label{sec:algebr-calc-diff}

Consider a unitary commutative associative $\Bbbk$-algebra~$A$ and
$A$-modules~$P$ and~$Q$.
\begin{definition}\label{sec:diff-oper-1}
  A $\Bbbk$-linear map~$\Delta\colon P\to Q$ is a \emph{differential operator}
  (DO) of order~$\le k$ if
  \begin{equation*}
    [a_0,[a_1,[\dots[a_k,\Delta]\dots]]]=0
  \end{equation*}
  for all~$a_0,\dots,a_k\in A$.
\end{definition}
The set of all DOs~$P\to Q$ forms an $A$-bimodule denoted by
$\Diff_*(P,Q)$. An operator $X\colon A\to P$ is called a \emph{$P$-valued
  derivation} if
\begin{equation*}
  X(ab)=aX(b)+bX(a),\qquad a,b\in A.
\end{equation*}
The module of these derivations is denoted by~$D_1(P)$. Define by induction
the modules
\begin{equation*}
  D_i(P)=\sd{X\in D_1(D_{i-1}(P))}{X(a,b)+X(b,a)=0},\qquad i\ge2,
\end{equation*}
and set formally~$D_0(P)=P$. Elements of~$D_i(P)$ are called
\emph{multiderivations}.

\begin{remark}
  \label{sec:diff-oper}
  Let~$M$ be a smooth manifold and~$A=C^\infty(M)$. Let also~$\pi$ and~$\xi$
  be vector bundles over~$M$ and~$P=\Gamma(\pi)$, $Q=\Gamma(\xi)$ be the
  modules of their smooth sections. Then Definition~\ref{sec:diff-oper-1}
  gives the classical notion of a linear differential operator.
\end{remark}

\subsection{Differential forms}
\label{sec:algebr-calc-diff-1}

\begin{proposition}
  The correspondence~$P\Rightarrow D_i(P)$ is a representable functor from the
  category of $A$-modules to itself. The corresponding representative object
  is denoted by~$\Lambda^i=\Lambda^i(A)$ and called the module of
  \emph{differential $i$-forms} of the algebra~$A$. In particular\textup{,}
  there exists a natural derivation $d\colon A\to\Lambda^1$ such that any
  derivation $X\in D_1(P)$ uniquely decomposes as $X=\psi_X\circ d$\textup{,}
  where $\psi_X\in\Hom_A(\Lambda^1,P)$. The module~$\Lambda^1$ is generated by
  the elements of the form~$da$\textup{,} $a\in A$\textup{,} while~$\Lambda^i$
  are exterior powers of~$\Lambda^1$. This leads to the complex
  \begin{equation*}\xymatrixrowsep{0.2pc}
    \xymatrix{
      0\ar[r]&A\ar[r]^d&\Lambda^1\ar[r]&\dots\ar[r]&
      \Lambda^i\ar[r]^d&\Lambda^{i+1}\ar[r]&\dots
    }
  \end{equation*}
  called the \emph{de~Rham} complex of~$A$.
\end{proposition}

\subsection{Exterior products}
\label{sec:algebr-calc-exter}

Due to the above formulated proposition, the module
$\Lambda^*=\sum_i\Lambda^i$ is a Grassmannian algebra with the
\emph{exterior}, or \emph{wedge product}
\begin{equation*}
  \wedge\colon\Lambda^i\otimes_A\Lambda^j\to\Lambda^{i+j}.
\end{equation*}
A similar operation
\begin{equation*}
  \wedge\colon D_i(A)\otimes_A D_j(P)\to D_{i+j}(P)
\end{equation*}
is introduced by induction in $D_*(P)=\sum_i D_i(P)$. Namely, for~$i+j=0$ we
set
\begin{equation*}
  a\wedge p=ap,\qquad a\in D_0(A)=A,\quad p\in D_0(P)=P,
\end{equation*}
and
\begin{equation*}
  (X\wedge Y)(a)=X\wedge Y(a)+(-1)^jX(a)\wedge Y,\qquad i+j>0,
\end{equation*}
$X\in D_i(A)$, $Y\in D_j(P)$. In this way, $D_*(A)$ becomes a Grassmannian
algebra, $D_*(P)$ being a module over $D_*(A)$.

\subsection{Inner product}
\label{sec:algebr-calc-inner}

The \emph{inner product}
\begin{equation*}
  \In\colon D_i(P)\otimes_A\Lambda^j\to
  \begin{cases}
    P\otimes_A\Lambda^{j-i},&
    j\ge i,\\
    D_{i-j}(P),&
    j\le i,
  \end{cases}
\end{equation*}
is defined by induction. If~$i=0$ we set
\begin{equation*}
  \In(p\otimes\omega)=p\otimes\omega,\qquad p\in P=D_0(P),\quad
  \omega\in\Lambda^j,
\end{equation*}
and for~$j=0$ we set
\begin{equation*}
  \In(X\otimes a)=aX,\qquad a\in A=\Lambda^0,\quad X\in D_i(P).
\end{equation*}
If $i,j>0$ we set
\begin{equation*}
  \In(X\otimes da\wedge\omega)=\In(X(a)\otimes\omega).
\end{equation*}
We shall use the notation
\begin{equation*}
  \In_X\omega=
    \begin{cases}
      \In(X\otimes\omega),&i\ge j,\\
      0,&i<j,
    \end{cases}\quad\text{and}\quad
  \In_\omega X=
  \begin{cases}
    \In(X\otimes\omega),&i\le j,\\
    0,&i>j.
  \end{cases}
\end{equation*}


\begin{remark}
  When $P=\Lambda^k$ and $j\ge i$, the inner product may be completed to the
  following operation
  \begin{equation*}
    \xymatrix{
      D_i(\Lambda^k)\otimes_A\Lambda^j\ar[r]^{\In}&
      \Lambda^k\otimes_A\Lambda^{j-i}\ar[r]^{\wedge}&\Lambda^{k+j-i}
    }
  \end{equation*}
  which will be also called the inner product.
\end{remark}

\begin{remark}
  Let $X\in D_i(P)$ and $\omega\in\Lambda^j$. Then the maps
  \begin{equation*}
    \In_X\colon\Lambda^*\to P\otimes_A\Lambda^*
  \end{equation*}
  and
  \begin{equation*}
    \In_\omega\colon D_*(P)\to D_*(P)
  \end{equation*}
  are super-differential operators of order $i$ and $j$, respectfully.
\end{remark}

\section{The Schouten bracket}
\label{sec:schouten-bracket}

We define here the Schouten bracket, describe its properties and related
(co)homologies. Some applications are also discussed.

\subsection{Definition and existence}
\label{sec:definition-existence}

Let $X\in D_i(A)$. Consider the Lie derivative
\begin{equation*}
  \Ld_X=d\circ\In_X-(-1)^i\In_X\circ d=
  [d,\In_X]\colon\Lambda^j\to\Lambda^{j-i}.
\end{equation*}

\begin{theorem}
  For any two elements $X\in D_i(A)$ and $X'\in D_{i'}(A)$ there exists a
  uniquely defined element $\SB{X}{X'}\in D_{i+i'-1}(A)$ such that
  \begin{equation*}
    [\Ld_X,\Ld_{X'}]=\Ld_{\SB{X}{X'}}.
  \end{equation*}
  This element is called the \emph{Schouten bracket} of $X$ and $X'$.
\end{theorem}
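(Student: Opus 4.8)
The plan is to realize this statement as the fortunate case of the general scheme of Section~\ref{sec:general-scheme}, with $\mathbb{P}=D_*(A)$, $\mathbb{Q}=\Lambda^*$ and $\phi=\In$. Two things must be shown: the \emph{existence} of an element $Y\in D_{i+i'-1}(A)$ with $[\Ld_X,\Ld_{X'}]=\Ld_Y$, and its \emph{uniqueness}. The whole argument runs on the graded Jacobi identity for the commutator of super-differential operators on $\Lambda^*$, together with the single structural input $d^2=0$. First I would record the basic fact that every Lie derivative graded-commutes with $d$: applying the graded Jacobi identity $[a,[b,c]]=[[a,b],c]+(-1)^{|a||b|}[b,[a,c]]$ to $[d,[d,\In_X]]$ gives $2[d,\Ld_X]=[[d,d],\In_X]=[2d^2,\In_X]=0$, and since $\Char\Bbbk\neq2$ this yields $[d,\Ld_X]=0$ for every $X\in D_*(A)$.

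Next I would reduce the commutator $[\Ld_X,\Ld_{X'}]$ to a single $d$-commutator. Writing $\Ld_X=[d,\In_X]$ and expanding $[[d,\In_X],\Ld_{X'}]$ by the graded Jacobi identity, the term containing $[d,\Ld_{X'}]$ drops out by the previous paragraph, leaving
\begin{equation*}
  [\Ld_X,\Ld_{X'}]=[d,\Theta],\qquad \Theta:=[\In_X,\Ld_{X'}].
\end{equation*}
Thus $[\Ld_X,\Ld_{X'}]$ automatically graded-commutes with $d$ and is presented as $[d,\cdot]$ applied to an operator $\Theta\colon\Lambda^*\to\Lambda^{*-(i+i'-1)}$, that is, an operator lowering the form degree by exactly $i+i'-1$, precisely the degree expected of an inner product $\In_Y$ with $Y\in D_{i+i'-1}(A)$.

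The substance of the theorem, and the step I expect to be the main obstacle, is to identify $\Theta$ with the inner product $\In_Y$ of a \emph{genuine} multiderivation $Y\in D_{i+i'-1}(A)$; once this is done, $[\Ld_X,\Ld_{X'}]=[d,\In_Y]=\Ld_Y$ and we set $\SB{X}{X'}:=Y$. Here the formal bracket juggling is easy, but certifying that $\Theta$ lies in the image of $\phi=\In$ is the real content. I would exploit representability: by the Proposition of Section~\ref{sec:algebr-calc-diff-1} one has $D_m(A)=\Hom_A(\Lambda^m,A)$, so it suffices to check that $\Theta$ is $A$-linear and obeys the insertion rule $\In_Y(da\wedge\omega)=\In_{Y(a)}\omega$ that defines the inner products, with $Y(a)\in D_{m-1}(A)$. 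The order estimates of the Remarks (each $\In$ is a super-differential operator of the stated order, and $[d,\cdot]$ does not raise the order) bound the order of $\Theta$ and let me read off the representing element $Y$ inductively on $m$ from the action of $\Theta$ on the generators $da_1\wedge\cdots\wedge da_{i+i'-1}$, reducing the order-$m$ case to the order-$(m-1)$ case.

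Finally, uniqueness follows from the injectivity of $Y\mapsto\Ld_Y$. Since $\phi=\In$ is a graded monomorphism (it determines the pairing $\langle Y,\cdot\rangle\colon\Lambda^m\to A$, hence $Y$), it is enough to recover $\In_Y$ from $\Ld_Y=[d,\In_Y]$; reconstructing the order-lowering part of $\Ld_Y$ on exact generators $da_1\wedge\cdots\wedge da_m$ recovers this pairing, so $\Ld_Y=0$ forces $Y=0$. This pins down $\SB{X}{X'}$ uniquely and completes the proof.
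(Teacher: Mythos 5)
Your two opening reductions are correct and cleanly done: the graded Jacobi identity with $d^2=0$ and $\Char\Bbbk\neq2$ gives $[d,\Ld_{X'}]=0$, and hence $[\Ld_X,\Ld_{X'}]=[d,\Theta]$ with $\Theta=[\In_X,\Ld_{X'}]$ of degree $-(i+i'-1)$. This is also a genuinely different organization from the paper, whose proof never forms $\Theta$ at all: it simply \emph{defines} the bracket by the double recursion $\SB{X}{a}=X(a)$, $\SB{a}{X'}=(-1)^{i'}X'(a)$, $\SB{X}{X'}(a)=\SB{X}{X'(a)}+(-1)^{i'-1}\SB{X(a)}{X'}$, and then verifies the commutation relation. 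The problem is your step~3. Certifying that $\Theta$ equals $\In_Y$ for an honest $Y\in D_{i+i'-1}(A)$ --- which you rightly call the real content; it is, up to sign, property~5 of the Proposition in Sec.~\ref{sec:scho-brack-prop} --- is asserted, not proved, and the tools you invoke cannot deliver it. Order estimates only bound the order of $\Theta$ as a super-differential operator, and operators of the right degree and order which commute with $d$ need not be inner products: for instance $\Ld_Z$ with $Z\in D_{i+i'}(A)$ has exactly the same degree $1-i-i'$ and satisfies $[d,\Ld_Z]=0$, yet it is not $A$-linear and not of the form $\In_Y$. What the verification actually requires is computing the graded commutators of $\Theta$ with multiplication by $a\in A$ and with the operator $e_{da}\colon\omega\mapsto da\wedge\omega$, using the calculus identities $[\In_X,a]=0$, $[\In_X,e_{da}]=\In_{X(a)}$, $[d,e_{da}]=0$, $[\In_X,\In_Z]=0$. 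Carrying this out yields $[\Theta,e_{da}]=[\In_X,\Ld_{X'(a)}]\pm[\In_{X(a)},\Ld_{X'}]$, i.e.\ precisely the paper's recursion at the operator level, and the identification $\Theta=\In_Y$ then needs the same double induction on $(i,i')$, including the check that $a\mapsto Y(a)$ is a derivation with the required skew-symmetry. So your plan, executed honestly, collapses into the induction you were trying to bypass; as written, the theorem's actual content is left unproved.

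There is also a concrete error in your uniqueness step. Since $d(da_1\wedge\dots\wedge da_m)=0$, on exact generators one has $\Ld_Y(da_1\wedge\dots\wedge da_m)=d\bigl(\In_Y(da_1\wedge\dots\wedge da_m)\bigr)=d\bigl(Y(a_1,\dots,a_m)\bigr)$, so $\Ld_Y=0$ forces the pairing only to take values in $\ker(d\colon A\to\Lambda^1)$, not to vanish: exact generators are blind to ``constants''. To recover $Y$ one must bring coefficients into play, e.g.\ from $\Ld_Y=0$ deduce $0=[\Ld_Y,a]=\pm[\In_Y,e_{da}]$, restrict to $\Lambda^{m-1}$ (where $\In_Y$ vanishes by degree count) to get $\In_{Y(a)}=0$ on all of $\Lambda^{m-1}$, and conclude $Y(a)=0$ for every $a$ by representability, then $Y=0$. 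This is easily repaired --- and, to be fair, the paper's own proof establishes only existence and passes over uniqueness in silence --- but your reconstruction as stated does not prove injectivity of $Y\mapsto\Ld_Y$.
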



\begin{proof}
  We establish existence of $\SB{X}{X'}$ by induction. For $i'=0$ we set
  \begin{equation*}
    \SB{X}{a}=X(a),\qquad a\in A=D_0(A),
  \end{equation*}
  and similarly for $i=0$
  \begin{equation*}
    \SB{a}{X'}=(-1)^{i'}X'(a).
  \end{equation*}
  If $i,i'>0$ we set
  \begin{equation*}
    \SB{X}{X'}(a)=\SB{X}{X'(a)}+(-1)^{i'-1}\SB{X(a)}{X'}.
  \end{equation*}
  It is easily checked that thus defined bracket enjoys the needed property.
\end{proof}

\subsection{Properties}
\label{sec:scho-brack-prop}

\begin{proposition}
  Let $X,X',X''\in D_*(A)$ be multiderivations of degree $i$\textup{,} $i'$
  and $i''$ respectively. Then\textup{:}
  \begin{enumerate}
  \item $\SB{X}{X'}+(-1)^{(i-1)(i'-1)}\SB{X'}{X}=0$\textup{,}
  \item $\SB{X}{\SB{X'}{X''}}=\SB{\SB{X}{X'}}{X''}
    +(-1)^{(i-1)(i'-1)}\SB{X'}{\SB{X}{X''}}$\textup{,}
  \item $\SB{X}{X'\wedge X''}= \SB{X}{X'}\wedge
    X''+(-1)^{(i-1)i'}X'\wedge\SB{X}{X''}$\textup{,}
  \item $\SB{X}{X'}=[X,X']$\textup{,} if $i=i'=1$\textup{,}
  \item $\In_{\SB{X}{X'}}=[\Ld_X,\In_{X'}]$.
  \end{enumerate}
\end{proposition}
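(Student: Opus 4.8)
The plan is to treat the five items by their logical type: items~(1) and~(2) are formal shadows of the graded antisymmetry and graded Jacobi identity of the operator commutator, item~(4) is a one-line unwinding of the inductive definition, item~(5) is the technical core, and item~(3) I would derive from item~(5). Throughout, the device that lets me pass from identities between operators back to identities between multiderivations is the injectivity of the correspondence $X\mapsto\In_X$ (equivalently, the uniqueness asserted in the Theorem): once two multiderivations have equal inner products, or equal Lie derivatives, they coincide.

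For item~(1) I would start from the defining relation $\Ld_{\SB{X}{X'}}=[\Ld_X,\Ld_{X'}]$ and the graded antisymmetry $[\Ld_X,\Ld_{X'}]=-(-1)^{|\Ld_X|\,|\Ld_{X'}|}[\Ld_{X'},\Ld_X]$ of the commutator of operators on $\Lambda^*$. Since $\In_X$ lowers form degree by $i$, the operator $\Ld_X=[d,\In_X]$ has degree $1-i\equiv i-1\pmod2$, so $|\Ld_X|\,|\Ld_{X'}|\equiv(i-1)(i'-1)$ and the sign is exactly the one claimed; uniqueness then upgrades the operator identity to the asserted equality of multiderivations. Item~(2) is obtained the same way from the graded Jacobi identity for $[\cdot\,,\cdot]$, read through the defining relation and uniqueness. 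Item~(4) I would simply compute: setting $i=i'=1$ in the recursion $\SB{X}{X'}(a)=\SB{X}{X'(a)}+(-1)^{i'-1}\SB{X(a)}{X'}$ and using the base cases $\SB{X}{c}=X(c)$ and $\SB{c}{X'}=(-1)^{i'}X'(c)$ for $c\in A$ gives $\SB{X}{X'}(a)=X(X'(a))-X'(X(a))=[X,X'](a)$, and a degree-$1$ multiderivation is determined by its action on $A$.

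The heart of the matter is item~(5), $\In_{\SB{X}{X'}}=[\Ld_X,\In_{X'}]$, and this is the step I expect to be the main obstacle. My preferred route uses only $d^2=0$: by the graded Jacobi identity $[d,[d,\In_X]]=0$, hence $[d,\Ld_X]=0$; applying $[d,-]$ to the right-hand side and invoking the defining relation then yields $[d,\,\In_{\SB{X}{X'}}-[\Ld_X,\In_{X'}]]=0$, so the two sides differ by a $d$-closed operator. It remains to show this difference vanishes, which I would do by a double induction on $(i,i')$ run in parallel with the inductive definitions of $\In$, of $\Ld$, and of $\SB{}{}$, evaluating both sides on decomposable forms $da\wedge\omega$ through $\In(X\otimes da\wedge\omega)=\In(X(a)\otimes\omega)$ and the recursion for $\SB{X}{X'}(a)$; the base cases $i=0$ and $i'=0$ are immediate from $\Ld_a=da\wedge(-)$, from $\In_a$ being multiplication by~$a$, and from the definition of the bracket. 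The delicate point, beyond the sign bookkeeping (which has to be carried out strictly in the paper's conventions for $\Ld$ and for the graded commutator, where factors of $(-1)^{1-i}$ are absorbed), is to keep track of the two regimes of the inner product, which sends $D_i(P)\otimes\Lambda^j$ into $\Lambda^{j-i}$ when $j\ge i$ but into $D_{i-j}(P)$ when $j\le i$, so that each commutator and each graded Leibniz rule is applied in the correct regime.

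Finally, item~(3) I would deduce from item~(5). Using that the inner product is multiplicative under the wedge product, $\In_{X'\wedge X''}=\In_{X'}\circ\In_{X''}$ (up to the standard sign), and that $[\Ld_X,-]$ is a graded derivation of the composition of operators, I expand
\begin{align*}
  \In_{\SB{X}{X'\wedge X''}}
  &=[\Ld_X,\In_{X'}\circ\In_{X''}]\\
  &=[\Ld_X,\In_{X'}]\circ\In_{X''}
    +(-1)^{(i-1)i'}\In_{X'}\circ[\Ld_X,\In_{X''}],
\end{align*}
where the sign is $(-1)^{|\Ld_X|\,|\In_{X'}|}$ with $|\Ld_X|\,|\In_{X'}|\equiv(i-1)i'\pmod2$. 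Recognizing the two commutators as $\In_{\SB{X}{X'}}$ and $\In_{\SB{X}{X''}}$ by item~(5) and refolding the compositions back into inner products of wedges, injectivity of $\In$ gives item~(3) with precisely the sign $(-1)^{(i-1)i'}$. Note that this last deduction is robust against an overall factor depending only on $i$ in item~(5), since that factor is common to the first slot throughout and cancels.
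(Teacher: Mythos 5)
The paper states this proposition \emph{without} proof (only the preceding existence--uniqueness theorem is proved, via the inductive construction of the bracket), so your reconstruction has to stand on its own merits. Most of it does: for items (1), (2) and (4) your arguments are complete and correct --- the degree count $|\Ld_X|\equiv i-1\pmod 2$, the transfer of graded antisymmetry and of the graded Jacobi identity through the defining relation $\Ld_{\SB{X}{X'}}=[\Ld_X,\Ld_{X'}]$, the use of uniqueness in the form ``$\Ld_Y=0$ implies $Y=0$'', and the evaluation on $a\in A$ for item (4) are all right. (One quibble: the theorem's uniqueness gives injectivity of $Y\mapsto\Ld_Y$; injectivity of $Y\mapsto\In_Y$, which you invoke for items (3) and (5), is a different statement and needs its own short argument.)

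The genuine gap is in item (5), which you yourself call the technical core. First, the one explicit computation you perform there is wrong: since $[d,\Ld_X]=0$, the graded Jacobi identity gives $[d,[\Ld_X,\In_{X'}]]=(-1)^{1-i}[\Ld_X,[d,\In_{X'}]]=(-1)^{i-1}\Ld_{\SB{X}{X'}}$, so the $d$-closed operator is $\In_{\SB{X}{X'}}-(-1)^{i-1}[\Ld_X,\In_{X'}]$, not the difference you display; for even $i$ your identity is false. Worse, the same computation shows that item (5) cannot be proved \emph{verbatim} in the conventions the paper sets up: if $\In_{\SB{X}{X'}}=[\Ld_X,\In_{X'}]$ held, applying $[d,\cdot\,]$ to both sides would give $\Ld_{\SB{X}{X'}}=(-1)^{i-1}\Ld_{\SB{X}{X'}}$, hence $\SB{X}{X'}=0$ whenever $i$ is even --- already absurd for $X\in D_2(A)$, $X'=a$, where $\SB{X}{a}=X(a)$. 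So the provable statement carries the sign $(-1)^{i-1}$ (equivalently, one must work with $\Ld_X=[\In_X,d]$), and a proof attempt has to detect and resolve this before any induction can start; your plan, which aims at the unsigned identity, would necessarily fail in even degrees. Second, even granting the corrected sign, your argument for (5) consists of an observation that carries no logical weight (as you concede, $d$-closedness of the difference does not make it vanish) plus a double induction that is described but never executed: the missing ingredient is the contraction lemma matching the recursion $\SB{X}{X'}(a)=\SB{X}{X'(a)}+(-1)^{i'-1}\SB{X(a)}{X'}$ against the behaviour of $[\Ld_X,\In_{X'}]$ on forms $da\wedge\omega$, and nothing in the proposal indicates how that inductive step closes. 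Since item (3) is deduced from (5) --- the reduction itself is correctly organized, its sign $(-1)^{(i-1)i'}$ is right, and your robustness remark does protect it against the $i$-dependent sign, though it additionally uses the unproved multiplicativity $\In_{X'\wedge X''}=\In_{X'}\circ\In_{X''}$ --- items (3) and (5), i.e.\ the substantive half of the proposition, remain unproved.
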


\section{Poisson structures}
\label{sec:poisson-structures}

To any bivector~$\mathcal{P}\in D_2(A)$, one can put into correspondence a
skew-symmetric bracket $\{a,b\}_{\mathcal{P}}=\mathcal{P}(a,b)$, $a,b\in A$.
\begin{proposition}
  The following statements are equivalent\textup{:}
  \begin{enumerate}
  \item $\{a,b\}_{\mathcal{P}}$ satisfies the Jacobi identity\textup{;}
  \item $\SB{\mathcal{P}}{\mathcal{P}}=0$\textup{;}
  \item $\partial_{\mathcal{P}}\circ\partial_{\mathcal{P}}=0$\textup{,} where
    $\partial_{\mathcal{P}}=\SB{\mathcal{P}}{\cdot}$.
  \end{enumerate}
\end{proposition}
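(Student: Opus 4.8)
The plan is to establish $(2)\Leftrightarrow(3)$ first, since it is purely algebraic, and then to close the loop through $(1)\Leftrightarrow(2)$ by evaluating the trivector $\SB{\mathcal{P}}{\mathcal{P}}\in D_3(A)$ on elements of~$A$.

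For $(2)\Leftrightarrow(3)$ I would feed $X=X'=\mathcal{P}\in D_2(A)$ and an arbitrary $X''\in D_*(A)$ into the graded Jacobi identity (property~(2) of the preceding proposition). Since $i=i'=2$, the sign $(-1)^{(i-1)(i'-1)}$ is $-1$, so the three terms collapse to
\begin{equation*}
  2\,\SB{\mathcal{P}}{\SB{\mathcal{P}}{X''}}=\SB{\SB{\mathcal{P}}{\mathcal{P}}}{X''},
\end{equation*}
that is, $2\,\partial_{\mathcal{P}}^2(X'')=\SB{\SB{\mathcal{P}}{\mathcal{P}}}{X''}$. As $\Char\Bbbk\neq2$ I may cancel the~$2$, so $\partial_{\mathcal{P}}^2=0$ is equivalent to the vanishing of the whole operator $\SB{\SB{\mathcal{P}}{\mathcal{P}}}{\cdot\,}$. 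Taking $X''=a\in D_0(A)=A$ and using $\SB{Y}{a}=Y(a)$ from the existence proof gives $\SB{\mathcal{P}}{\mathcal{P}}(a)=0$ for every~$a$; as a multiderivation vanishing on all of~$A$ is itself zero, this forces $\SB{\mathcal{P}}{\mathcal{P}}=0$, settling $(2)\Leftrightarrow(3)$.

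For $(1)\Leftrightarrow(2)$ I would compute $\SB{\mathcal{P}}{\mathcal{P}}$ on functions. First, $\partial_{\mathcal{P}}(a)=\SB{\mathcal{P}}{a}=\mathcal{P}(a)$ is the Hamiltonian derivation $b\mapsto\{a,b\}_{\mathcal{P}}$, so $\partial_{\mathcal{P}}^2(a)=\SB{\mathcal{P}}{\mathcal{P}(a)}\in D_2(A)$. Unwinding this with the inductive rule $\SB{X}{X'}(b)=\SB{X}{X'(b)}+(-1)^{i'-1}\SB{X(b)}{X'}$ (here $i'=1$), then applying $\SB{X}{f}=X(f)$ for $f\in A$ and property~(4) (the bracket of two derivations is their commutator), I obtain after evaluation on $b$ and $c$
\begin{equation*}
  \partial_{\mathcal{P}}^2(a)(b,c)=\{\{a,b\}_{\mathcal{P}},c\}_{\mathcal{P}}
  +\{b,\{a,c\}_{\mathcal{P}}\}_{\mathcal{P}}-\{a,\{b,c\}_{\mathcal{P}}\}_{\mathcal{P}},
\end{equation*}
which, after using the skew-symmetry of the bracket, is exactly the negative of the Jacobiator $\{a,\{b,c\}\}+\{b,\{c,a\}\}+\{c,\{a,b\}\}$. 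Combining with $2\,\partial_{\mathcal{P}}^2(a)=\SB{\mathcal{P}}{\mathcal{P}}(a)$ from the previous step then yields $\SB{\mathcal{P}}{\mathcal{P}}(a)(b,c)=-2\bigl(\{a,\{b,c\}\}+\{b,\{c,a\}\}+\{c,\{a,b\}\}\bigr)$, so (again using $\Char\Bbbk\neq2$) the trivector $\SB{\mathcal{P}}{\mathcal{P}}$ vanishes iff $\{\cdot,\cdot\}_{\mathcal{P}}$ satisfies the Jacobi identity.

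The routine but delicate part — the main obstacle — is the sign bookkeeping in unwinding $\SB{\mathcal{P}}{\mathcal{P}(a)}$: one must respect the grading conventions of both the inductive Leibniz rule and property~(4), and verify that the three resulting brackets reassemble into the Jacobiator with the correct overall sign. The hypothesis $\Char\Bbbk\neq2$ enters twice and is genuinely needed to pass between the ``squared differential'' and ``bracket vanishes'' formulations.
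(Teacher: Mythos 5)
The paper states this proposition without proof, so there is nothing to compare your argument against; what you wrote is correct and is the standard argument one would supply. Your two key computations check out: the graded Jacobi identity with $i=i'=2$ indeed gives $2\,\partial_{\mathcal{P}}^2=\SB{\SB{\mathcal{P}}{\mathcal{P}}}{\cdot\,}$ (and evaluating at $a\in A$ plus the fact that a multiderivation vanishing on all of $A$ is zero closes $(2)\Leftrightarrow(3)$), while unwinding the inductive definition yields $\partial_{\mathcal{P}}^2(a)(b,c)=\{\{a,b\}_{\mathcal{P}},c\}_{\mathcal{P}}+\{b,\{a,c\}_{\mathcal{P}}\}_{\mathcal{P}}-\{a,\{b,c\}_{\mathcal{P}}\}_{\mathcal{P}}$, which skew-symmetry converts into minus the Jacobiator, closing $(1)\Leftrightarrow(2)$. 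Both invocations of $\Char\Bbbk\neq2$ are genuinely needed, exactly as you say.
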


\begin{definition}
  If one of the previous conditions fulfills then:
  \begin{enumerate}
  \item $\mathcal{P}$ is called a \emph{Poisson structure} and a pair
    $(A,\mathcal{P})$ is a \emph{Poisson algebra}.
  \item $\{\cdot\,,\cdot\}_{\mathcal{P}}$ is the \emph{Poisson bracket}
    associated with $\mathcal{P}$.
  \item $\mathcal{P}(a)\in D_1(A)$ are \emph{Hamiltonian derivations}.
  \item Derivations $X$ satisfying $X\{a,b\}_{\mathcal{P}}=
    \{Xa,b\}_{\mathcal{P}}+\{a,Xb\}_{\mathcal{P}}$ are \emph{canonical
      derivations}.
  \end{enumerate}
\end{definition}

\subsection{Example: algebraic $T^*$ (see Ref.~\cite{WhatIsHF})}
\label{sec:example:-algebraic-t}

Let $\Diff_*(A)=\cup_{k\ge0}\Diff_k(A)$ denote the algebra of all DOs $A\to
A$. For any $\Delta\in\Diff_k(A)$ the coset
$[\Delta]_k=\Delta\bmod\Diff_{k-1}(A)$ is called its \emph{symbol}.

If $s_1=[\Delta_1]_{k_1}$, $s_2=[\Delta_1]_{k_2}$ are two symbols we define
their product by
\begin{equation*}
  s_1\cdot s_2=[\Delta_1\circ\Delta_2]_{k_1+k_2}
\end{equation*}
and their bracket by
\begin{equation*}
  \{s_1,s_2\}=[\Delta_1\circ\Delta_2-\Delta_2\circ\Delta_1]_{k_1+k_2-1}.
\end{equation*}
In such a way we obtain the \emph{algebra of symbols}
\begin{equation*}
  S_*(A)=\sum_k\frac{\Diff_k(A)}{\Diff_{k-1}(A)}.
\end{equation*}

\begin{proposition}
  The above introduced algebra of symbols $S_*(A)$ is a graded commutative
  algebra with a graded Poisson bracket $\{\cdot\,,\cdot\}$. In the case
  $A=C^\infty(M)$ it coincides with the algebra of smooth functions on $T^*M$
  polynomial along the fibers\textup{,} while the bracket is the one defined
  by the canonical symplectic form $\Omega=dp\wedge dq$.
\end{proposition}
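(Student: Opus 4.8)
The plan is to do everything inside the associative algebra $\Diff_*(A)$ and then push forward to the quotient $S_*(A)=\sum_k\Diff_k(A)/\Diff_{k-1}(A)$. The one fact on which the entire statement rests is the order estimate for commutators: if $\Delta_1\in\Diff_{k_1}(A)$ and $\Delta_2\in\Diff_{k_2}(A)$, then $\Delta_1\circ\Delta_2\in\Diff_{k_1+k_2}(A)$ while $[\Delta_1,\Delta_2]\in\Diff_{k_1+k_2-1}(A)$. First I would prove this using, for $a\in A$, the operators $\delta_a(\Delta)=[a,\Delta]$ (writing $a$ for multiplication by $a$). Three elementary remarks drive the argument: by Definition~\ref{sec:diff-oper-1}, $\Delta\in\Diff_k$ is exactly the condition $\delta_{a_0}\cdots\delta_{a_k}\Delta=0$; each $\delta_a$ is a derivation of composition, $\delta_a(\Delta\circ\Delta')=\delta_a(\Delta)\circ\Delta'+\Delta\circ\delta_a(\Delta')$; and the $\delta_a$ commute, since $[\delta_a,\delta_b]=[[a,b],\cdot\,]=0$ because $A$ is commutative. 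Applying $k_1+k_2+1$ of the $\delta$'s to a composite and expanding by Leibniz makes every term vanish, which gives the first inclusion. For the commutator one applies exactly $k_1+k_2$ of the $\delta$'s: all terms in which more than $k_1$ factors land on $\Delta_1$ (equivalently fewer than $k_2$ on $\Delta_2$) die, and the surviving terms are products of two order-zero operators, i.e.\ multiplications by elements of $A$; since these commute, the contributions of $\Delta_1\circ\Delta_2$ and $\Delta_2\circ\Delta_1$ cancel partition by partition.

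This estimate makes both operations well defined on symbols: replacing a representative by one of lower order alters $\Delta_1\circ\Delta_2$, resp.\ $\Delta_1\circ\Delta_2-\Delta_2\circ\Delta_1$, by an operator of order $<k_1+k_2$, resp.\ $<k_1+k_2-1$, hence does not change the relevant coset. It also yields commutativity of the product at once, since $[\Delta_1,\Delta_2]\in\Diff_{k_1+k_2-1}$ forces $[\Delta_1\circ\Delta_2]_{k_1+k_2}=[\Delta_2\circ\Delta_1]_{k_1+k_2}$, and associativity is inherited from composition. For the Poisson structure I would check each axiom on $\Diff_*(A)$ and then project: skew-symmetry of $\{\cdot\,,\cdot\}$ is immediate from the defining formula; the graded Jacobi identity descends from the ordinary operator Jacobi identity, which holds in any associative algebra; and the graded Leibniz rule descends from the derivation identity $[\Delta_1,\Delta_2\circ\Delta_3]=[\Delta_1,\Delta_2]\circ\Delta_3+\Delta_2\circ[\Delta_1,\Delta_3]$. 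The signs attached to the grading are dictated by the degree $-1$ of the bracket and need be matched only once.

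For the smooth case $A=C^\infty(M)$ I would invoke the classical principal-symbol map, sending the class $[\Delta]_k$ of a $k$-th order operator $\Delta=\sum_{|\alpha|\le k}a^\alpha\partial^\alpha$ to the function $\sigma_\Delta(q,p)=\sum_{|\alpha|=k}a^\alpha(q)\,p^\alpha$ on $T^*M$. This is a well-defined fiberwise-homogeneous polynomial of degree $k$, independent of the chart (it is a section of $S^kTM$), and the symbol map $S_*(A)\to\sum_k\Gamma(S^kTM)$ is an isomorphism of graded algebras onto the functions on $T^*M$ polynomial along the fibers, composition of operators inducing symmetric multiplication, i.e.\ ordinary multiplication of these polynomials. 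It then remains to identify the bracket, which I would do by a direct local computation of the top-order part of $[\Delta_1,\Delta_2]$, giving $\sigma_{[\Delta_1,\Delta_2]}=\sum_i\bigl(\partial_{p_i}\sigma_{\Delta_1}\,\partial_{q_i}\sigma_{\Delta_2}-\partial_{q_i}\sigma_{\Delta_1}\,\partial_{p_i}\sigma_{\Delta_2}\bigr)$, which is precisely the Poisson bracket determined by $\Omega=dp\wedge dq$.

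I expect the order estimate on commutators to be the conceptual crux: once it is in place, well-definedness, commutativity, and the degree $-1$ of the bracket are automatic, and the Poisson axioms come for free from the associative structure. The only genuinely computational obstacle is the local identification in the smooth case, namely checking that the symbol map is an algebra isomorphism onto the fiberwise polynomials and, above all, that the leading term of the operator commutator reproduces the canonical symplectic bracket with the correct signs.
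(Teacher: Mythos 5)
Your proposal is correct. There is, however, nothing in the paper to compare it against: the proposition is stated without proof, being quoted as a known result from the cited reference on the Hamiltonian formalism, so your argument is precisely the one that fills this gap, and it is the standard one. The commutator order estimate $[\Diff_{k_1},\Diff_{k_2}]\subset\Diff_{k_1+k_2-1}$, proved via the commuting derivations $\delta_a=[a,\cdot\,]$ of the composition product, is indeed the crux; everything else (well-definedness on cosets, commutativity of the symbol product, the Poisson axioms descending from the associative structure) follows formally, and the smooth case reduces to the classical local computation of the leading term of an operator commutator, once one uses the fact (Remark~\ref{sec:diff-oper} of the paper) that the algebraic definition of differential operators over $C^\infty(M)$ recovers the classical one. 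One small simplification of your write-up: since the whole structure descends from an honest, ungraded associative algebra, the bracket on $S_*(A)$ satisfies the ordinary (unsigned) skew-symmetry, Jacobi and Leibniz identities, the grading entering only through the statement that the bracket has degree $-1$; so there are no Koszul signs to match, and the final sentence of your second paragraph is vacuous rather than an obstacle.
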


\begin{remark}
  \label{sec:example:-algebraic-t-1}
  The parallel between geometrical constructions and the corresponding
  algebraic modules is even deeper, though perhaps not so straightforward. As
  an example, let us describe how the canonical form~$\rho=p\,dq$ is defined
  within the model under consideration (other illustrations can be found,
  e.g., in Refs.~\cite{JK:Char,JK:deltaLemma}).

  Note first that exactly in the same way as it was done above one can define
  symbols of arbitrary operators $\Delta\in\Diff_*(P,Q)$. Moreover, under the
  assumption of Sec.~\ref{sec:introduction} one has the isomorphism
  \begin{equation}\label{eq:2}
    S_*(P,Q)=S_*(A)\otimes_A\Hom_A(P,Q).
  \end{equation}
  Now, to define a $1$-form, one needs to evaluate all derivations on this
  form. Let~$X\colon S_*(A)\to R$ be such a derivation,~$R$ being an
  $S_*(A)$-module. Since $A=S_0(A)\subset S_*(A)$, one can consider the
  restriction $\bar{X}=\eval{\bar{X}}_A\colon A\to R$. Due to
  Eq.~\eqref{eq:2}, one has
  \begin{equation*}
    [\bar{X}]\in S_*(A,R)=S_*(A)\otimes_A R,
  \end{equation*}
  and we set
  \begin{equation*}
    \In_{\bar{X}}\rho=\mu_S(\bar{X}),
  \end{equation*}
  where $\mu_S\colon S_*(A)\otimes_A R\to R$ is the
  multiplication. Consequently, we can define the form~$\Omega=d\rho$, but for
  general algebras it may be degenerate, contrary to the geometric case.
\end{remark}

\begin{remark}
  \label{sec:example:-algebraic-t-2}
  It may be also appropriate to discuss another parallel here. Namely, in
  geometry, $1$-forms are identified with sections of the cotangent bundle. In
  algebra, the notion of section transforms to a homomorphism $\phi\colon
  S_*(A)\to A=S_0(A)$ such that $\eval{\phi}_{A}=\id$. Consider a $1$-form
  $\omega\in\Lambda^1(A)$ and let us define the corresponding ``section''
  $\phi_\omega\colon S_*(A)\to A$. To this end, recall that in the case
  when~$\Lambda^1(A)$ is a finite type projective module the algebra
  $\Diff_*(A)$ is generated by $A=\Diff_0(A)$ and~$D_1(A)$. Therefore, any
  element~$s\in S_k(A)$ is of the form
  $s=\sum[X_{\alpha_1}]\cdot\ldots\cdot[X_{\alpha_k}]$, where~$X_\alpha$ are
  derivations. Then we set
  \begin{equation*}
    \phi_\omega(s)=
    \sum\In_{X_{\alpha_1}}(\omega)\cdot\ldots\cdot\In_{X_{\alpha_k}}(\omega).
  \end{equation*}
  Conversely, let~$\phi\colon S_*(A)\to A$ be a homomorphism. To define the
  corresponding $1$-form~$\omega_\phi$, we need to evaluate an arbitrary
  derivation $X\colon A\to P$ at it, where $P$ is an $A$-module. But
  \begin{equation*}
    [X]\in S_*(A,P)=S_*(A)\otimes_A P,
  \end{equation*}
  and we set
  \begin{equation*}
    \In_X(\omega_\phi)=\mu_A(\phi\otimes\id_P([X])),
  \end{equation*}
  where $\mu_A\colon A\otimes_\Bbbk P\to P$ is the multiplication.
\end{remark}

\subsection{Poisson cohomologies}
\label{sec:poisson-cohomologies}

Let $(A,\mathcal{P})$ be a Poisson algebra. The sequence
\begin{equation*}
  \xymatrix{
    0\ar[r]&A\ar[r]^{\partial_{\mathcal{P}}}&D_1(A)\ar[r]&\dots\ar[r]&
    D_i(P)\ar[r]^{\partial_{\mathcal{P}}}&D_{i+1}(P)\ar[r]&\dots,
  }
\end{equation*}
where $\partial_{\mathcal{P}}=\SB{\mathcal{P}}{\cdot}$, is the \emph{Poisson
  complex} of $A$ and its cohomologies $H^i(A,\mathcal{P})$ are the
\emph{Poisson cohomologies}.

\begin{proposition}
  \begin{enumerate}
  \item $H^0(A;\mathcal{P})$ consists of Casimirs of $\mathcal{P}$ and
    coincides with the Poisson center of $A$.
  \item $H^1(A;\mathcal{P})=\Can(\mathcal{P})/\Ham(\mathcal{P})$.
    where $\Can(\mathcal{P})$ is the space of canonical derivations and
    $\Ham(\mathcal{P})$ consists of the Hamiltonian ones.
  \item $H^2(A;\mathcal{P})$ coincides with the set of classes of nontrivial
    infinitesimal deformations of the Poisson structure
    $\mathcal{P}$.
  \item $H^3(A;\mathcal{P})$ contains obstructions to prolongation of
    infinitesimal deformations up to formal ones.
  \end{enumerate}
\end{proposition}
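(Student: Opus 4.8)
The plan is to compute each group $H^i(A;\mathcal{P})$ directly from the differential $\partial_{\mathcal{P}}=\SB{\mathcal{P}}{\cdot}$, leaning on the recursive definition of the Schouten bracket and the five properties collected in Sec.~\ref{sec:scho-brack-prop}. For statement~(1), I would note that $H^0$ is the kernel of $\partial_{\mathcal{P}}\colon A\to D_1(A)$; the base case of the recursion gives $\SB{\mathcal{P}}{a}=\mathcal{P}(a)$, so $a$ is a cocycle exactly when the Hamiltonian derivation $\mathcal{P}(a)$ vanishes, i.e.\ when $\{a,b\}_{\mathcal{P}}=0$ for every $b$. These are the Casimirs, which form the Poisson center. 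For statement~(2), the image of $\partial_{\mathcal{P}}\colon A\to D_1(A)$ is by definition $\Ham(\mathcal{P})$. To identify the kernel of $\partial_{\mathcal{P}}\colon D_1(A)\to D_2(A)$, I would apply the recursion to $X\in D_1(A)$ and use property~(4) (the bracket of two derivations is their commutator) to obtain
\begin{equation*}
  \SB{\mathcal{P}}{X}(a)(b)=\{Xa,b\}_{\mathcal{P}}+\{a,Xb\}_{\mathcal{P}}-X\{a,b\}_{\mathcal{P}},
\end{equation*}
so that $X$ is a cocycle precisely when it is canonical, whence $H^1=\Can(\mathcal{P})/\Ham(\mathcal{P})$.

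For statement~(3), I would introduce a deformation $\mathcal{P}_t=\mathcal{P}+t\mathcal{Q}+O(t^2)$ and impose $\SB{\mathcal{P}_t}{\mathcal{P}_t}=0$. Since the Schouten bracket is symmetric on bivectors (property~(1) with $i=i'=2$) and $\SB{\mathcal{P}}{\mathcal{P}}=0$, the first-order term equals $2\partial_{\mathcal{P}}(\mathcal{Q})$, so infinitesimal deformations are exactly the $2$-cocycles. A deformation induced by an infinitesimal automorphism along $X\in D_1(A)$ contributes $\SB{X}{\mathcal{P}}=-\partial_{\mathcal{P}}(X)$, a coboundary, and these are the trivial ones; passing to the quotient yields $H^2(A;\mathcal{P})$. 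For statement~(4), I would push the expansion to order $t^2$, writing $\mathcal{P}_t=\mathcal{P}+t\mathcal{Q}+t^2\mathcal{R}+\cdots$; vanishing of the $t^2$-coefficient reads $2\partial_{\mathcal{P}}(\mathcal{R})=-\SB{\mathcal{Q}}{\mathcal{Q}}$, which is solvable for $\mathcal{R}$ precisely when $\SB{\mathcal{Q}}{\mathcal{Q}}\in D_3(A)$ is a coboundary. The crucial point is that $\SB{\mathcal{Q}}{\mathcal{Q}}$ is automatically a $3$-cocycle: expanding $\SB{\mathcal{P}}{\SB{\mathcal{Q}}{\mathcal{Q}}}$ by the graded Jacobi identity (property~(2)) and invoking $\SB{\mathcal{P}}{\mathcal{Q}}=0$ annihilates both resulting terms. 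Hence $[\SB{\mathcal{Q}}{\mathcal{Q}}]\in H^3(A;\mathcal{P})$ is the obstruction to prolonging the first-order deformation, and analogous classes govern the higher orders.

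The computations behind~(1) and~(2) are unobstructed once the recursion and property~(4) are in hand, the only care being the bookkeeping of signs. The genuine difficulty resides in~(3) and~(4): one must correctly expand $\SB{\mathcal{P}_t}{\mathcal{P}_t}$ and, above all, verify that $\SB{\mathcal{Q}}{\mathcal{Q}}$ is closed, so that the obstruction defines a legitimate class in $H^3$. This closedness is exactly the place where the graded Jacobi identity does the essential work.
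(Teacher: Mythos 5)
The paper states this proposition without proof, so there is no argument of the author's to compare against; your write-up supplies the standard one and it is correct. Your computations in (1) and (2) follow exactly from the base cases of the recursive definition of $\SB{\cdot}{\cdot}$ together with property (4), and your treatment of (3) and (4) is the usual deformation-theoretic reading of the Lichnerowicz--Poisson complex: the sign $(-1)^{(i-1)(i'-1)}$ with $i=i'=2$ indeed makes the bracket symmetric on bivectors, the division by $2$ is licensed by the standing assumption $\Char\Bbbk\neq2$, and the closedness of $\SB{\mathcal{Q}}{\mathcal{Q}}$ via the graded Jacobi identity is precisely the point that makes the obstruction a well-defined class in $H^3(A;\mathcal{P})$. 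The only thing worth adding for completeness is that for the full statement (4) one iterates: at order $n$ the obstruction is $\sum_{i+j=n,\,i,j\ge1}\SB{\mathcal{Q}_i}{\mathcal{Q}_j}$, whose closedness again follows from the Jacobi identity once all lower orders are solved.
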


\subsection{Poisson homologies (see Ref.~\cite{Bryl})}
\label{sec:poisson-homologies}

Take a Poisson algebra $(A,\mathcal{P})$ and consider the operator
\begin{equation*}
  d_{\mathcal{P}}=\Ld_{\mathcal{P}}=[d,\In_\mathcal{P}]\colon
  \Lambda^j\to\Lambda^{j-1}.
\end{equation*}
By definition of the Poisson structure, one has
\begin{equation*}
  2d_{\mathcal{P}}\circ d_{\mathcal{P}}=[d_{\mathcal{P}},d_{\mathcal{P}}]=
  [\Ld_{\mathcal{P}},\Ld_{\mathcal{P}}]=
  \Ld_{\SB{\mathcal{P}}{\mathcal{P}}}=0
\end{equation*}
and one gets the complex
\begin{equation*}
  \xymatrix{
    \dots\ar[r]&\Lambda^j\ar[r]^{d_{\mathcal{P}}}&\Lambda^{j-1}\ar[r]&\dots\ar[r]&
    \Lambda^1\ar[r]^{d_{\mathcal{P}}}&A\ar[r]&0,
  }
\end{equation*}
whose homologies $H_j(A,\mathcal{P})$ are called the \emph{Poisson homologies}
of $(A,\mathcal{P})$. The action of $d_{\mathcal{P}}$ is fully defined by the
following two properties:
\begin{equation*}
  d_{\mathcal{P}}(\omega\wedge\omega')=(d_{\mathcal{P}}\omega)\wedge\omega'+
  (-1)^j\omega\wedge d_{\mathcal{P}}\omega'
\end{equation*}
and
\begin{equation*}
  d_{\mathcal{P}}(a\,db)=\{a,b\}_{\mathcal{P}},\qquad a,b\in A.
\end{equation*}

\subsection{Hamiltonian filtrations}
\label{sec:hamilt-filtr}

Let $\mathcal{H}^1\subset D_*(A)$ be the ideal generated by Hamiltonian
derivations. Let
\begin{equation*}
  \mathcal{H}^p=
  \underbrace{\mathcal{H}^1\wedge\dots\wedge\mathcal{H}^1}_{p\ \text{times}}
\end{equation*}
be its powers. Then
\begin{equation*}
  D_*(A)=\mathcal{H}^0\supset\mathcal{H}^1\supset\dots\supset\mathcal{H}^p
  \supset\mathcal{H}^{p+1}\supset\dots
\end{equation*}
is a filtration that generates a spectral sequence for Poisson cohomologies.
In a dual way, the filtration
\begin{equation*}
  0\subset\mathcal{H}_1\subset\dots\subset\mathcal{H}_p\subset
  \mathcal{H}_{p+1}\subset\dots
\end{equation*}
where
\begin{equation*}
  \mathcal{H}_p=\sd{\omega\in\Lambda^*}{\In_{X_1}(\dots(\In_{X_p}(\omega))\dots)=0
    \ \ \forall X_i\in\Ham(\mathcal{P})}
\end{equation*}
gives rise to a spectral sequence for Poisson homologies.

\section{Extended Poisson bracket}
\label{sec:extend-poiss-brack}

The Poisson bracket defined by a Poissonian bi-vector~$\mathcal{P}$ can be
extended to a super-bracket on the Grassmannian algebra~$\Lambda^*(A)$.


Consider the differential $\partial_{\mathcal{P}}\colon D_i(A)\to D_{i+1}(A)$
and a form $\omega\in\Lambda^j$. Then we have the map
\begin{equation*}
  \Ld_\omega^{\mathcal{P}}=[\partial_{\mathcal{P}},\In_\omega]
  \colon D_i(A)\to D_{i-j+1}(A).
\end{equation*}
\begin{proposition}
  For any two forms $\omega\in\Lambda^j$\textup{,} $\omega'\in\Lambda^{j'}$
  the equality
  \begin{equation*}
    \In_{\{\omega,\omega'\}_{\mathcal{P}}}=[\Ld_\omega^{\mathcal{P}},\In_{\omega'}]
  \end{equation*}
  uniquely determines a form
  $\{\omega,\omega'\}_{\mathcal{P}}\in\Lambda^{j+j'-1}$\textup{,} which is
  called their \emph{Poisson bracket}.
\end{proposition}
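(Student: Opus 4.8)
The plan is to read the statement as the instance of the general scheme of Section~\ref{sec:general-scheme} in which $\mathbb{P}=\Lambda^*(A)$, $\mathbb{Q}=D_*(A)$, the differential is $d=\partial_{\mathcal{P}}$ (which squares to zero precisely because $\SB{\mathcal{P}}{\mathcal{P}}=0$), and $\phi=\In$ is the inner product $\Lambda^*\to\Hom_\Bbbk^{\gr}(D_*(A),D_*(A))$; the defining equation $\In_{\{\omega,\omega'\}_{\mathcal{P}}}=[\Ld_\omega^{\mathcal{P}},\In_{\omega'}]$ is then patterned exactly on item~(5) of the properties of the Schouten bracket, with $(d,\In_X\text{ on forms})$ replaced by $(\partial_{\mathcal{P}},\In_\omega\text{ on multivectors})$. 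First I would check the degree bookkeeping: $\In_{\omega'}$ lowers the $D$-degree by $j'$ and $\Ld_\omega^{\mathcal{P}}=[\partial_{\mathcal{P}},\In_\omega]$ lowers it by $j-1$, so the commutator $C:=[\Ld_\omega^{\mathcal{P}},\In_{\omega'}]$ lowers degree by $j+j'-1$, exactly as an inner product $\In_\eta$ by a form $\eta\in\Lambda^{j+j'-1}$ would. The proposition then splits into \emph{uniqueness} (the correspondence $\eta\mapsto\In_\eta$ is injective) and \emph{existence} (the operator $C$ really lies in the image of $\In$).

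For uniqueness I would invoke the standing assumption that $\Lambda^1$, and hence every exterior power $\Lambda^n$, is projective of finite type and therefore reflexive, together with the representability $D_n(A)=\Hom_A(\Lambda^n,A)$. Restricting $\In_\eta$ to $D_{j+j'-1}(A)$ yields the $A$-valued pairing $X\mapsto\In_\eta X=\In_X\eta$, which is exactly the evaluation map of $\Lambda^{j+j'-1}$ against $\Hom_A(\Lambda^{j+j'-1},A)$; reflexivity makes the induced map $\Lambda^{j+j'-1}\to\Hom_A(D_{j+j'-1}(A),A)$ injective. Hence $\In_\eta=0$ forces $\eta=0$, so $\{\omega,\omega'\}_{\mathcal{P}}$ is determined by the operator equation.

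The substance is existence, which I would prove by induction on $j=\deg\omega$. For the base $j=0$, $\omega=c\in A$, one has $\In_c=c\cdot{}$, and by the Leibniz property of the Schouten bracket together with $\SB{\mathcal{P}}{c}=\mathcal{P}(c)$ one gets $\Ld_c^{\mathcal{P}}=\mathcal{P}(c)\wedge(\cdot)$, wedging by the Hamiltonian derivation $\mathcal{P}(c)\in\Ham(\mathcal{P})$. The Cartan-type commutation relation between wedging by a derivation and contraction by a form then gives $[\Ld_c^{\mathcal{P}},\In_{\omega'}]=\pm\In_{\In_{\mathcal{P}(c)}\omega'}$, so $\{c,\omega'\}_{\mathcal{P}}=\pm\In_{\mathcal{P}(c)}\omega'\in\Lambda^{j'-1}$, which for $\omega'=db$ recovers $\pm\{c,b\}_{\mathcal{P}}$ as it should. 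For the inductive step I would write a general $\omega$ as a sum of terms $da\wedge\bar\omega$ (using that $\Lambda^*$ is generated by $\Lambda^1$), factor $\In_\omega$ accordingly, and expand $C$ by the graded Jacobi identity so that $\partial_{\mathcal{P}}$ enters only through $\Ld$-operators; the inductive hypothesis then identifies each resulting piece as an inner product.

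The delicate point, and the main obstacle, is to confirm that $C$ is a genuine contraction on \emph{all} of $D_*(A)$ and not merely on the top component $D_{j+j'-1}(A)$. I would first define $\eta\in\Lambda^{j+j'-1}$ via reflexivity from $C|_{D_{j+j'-1}(A)}\in\Hom_A(D_{j+j'-1}(A),A)$, so that $C$ and $\In_\eta$ agree on $D_{j+j'-1}(A)$ by construction, and then extend the agreement to every $D_n(A)$. Since $\In_\tau(\In_\eta Y)=\pm\In_{\tau\wedge\eta}Y$ for $\tau\in\Lambda^{n-j-j'+1}$ reduces any such check to an $A$-valued identity, it suffices to verify $\In_\tau\circ C=\In_{\tau\wedge\eta}$ after pushing $\In_\tau$ through the commutator. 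The one ingredient that must be handled with care is the $A$-linearity of $C$: although $\partial_{\mathcal{P}}$ is not $A$-linear, its non-$A$-linear part is first order and cancels in the double commutator $[[\partial_{\mathcal{P}},\In_\omega],\In_{\omega'}]$, leaving an $A$-linear operator; establishing this cancellation cleanly, i.e.\ that $C$ is a super-differential operator of order zero in the derivation direction, is where the real work lies.
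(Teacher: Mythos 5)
The paper prints no proof of this proposition; the only guidance it supplies is indirect, namely the remark after the list of properties that the identities $\{a,db\}_{\mathcal{P}}=-\{a,b\}_{\mathcal{P}}$, $\{da,db\}_{\mathcal{P}}=d\{a,b\}_{\mathcal{P}}$ and the Leibniz rule ``may be taken for the constructive definition of the extended bracket'' --- in exact parallel with the printed proof of the Schouten-bracket theorem, where the bracket is produced by explicit inductive formulas and the operator identity is then ``easily checked''. Your argument is the mirror image of that route: you start from the operator $C=[\Ld_\omega^{\mathcal{P}},\In_{\omega'}]$, verify the degree shift, obtain uniqueness from reflexivity of the finite projective module $\Lambda^{j+j'-1}$ paired against $D_{j+j'-1}(A)=\Hom_A(\Lambda^{j+j'-1},A)$ --- which is exactly where the standing hypothesis on $\Lambda^1$ enters, a point the paper never isolates --- and then recover the constructive formulas as the base case and Leibniz step of an induction. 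This is a legitimate and arguably more informative organization: the constructive route buys an unconditional definition with the operator identity relegated to a verification, while yours buys uniqueness for free and makes the role of projectivity explicit.

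Three points need tightening. First, the decomposition $\omega=\sum a_0\,da_1\wedge\bar\omega$ forces a second base case $j=1$: the inductive step produces the term $\In_{\bar\omega}\,[\Ld^{\mathcal{P}}_{da},\In_{\omega'}]$, which is not covered by the $j=0$ case and must be computed directly for all $\omega'$. Second, the $A$-linearity of $C$ that you flag as the main obstacle is in fact a short computation rather than ``real work'': since contractions by forms are $A$-linear, $[C,a]=[[[\partial_{\mathcal{P}},a],\In_\omega],\In_{\omega'}]$; the inner bracket $[\partial_{\mathcal{P}},a]$ is wedging by the Hamiltonian derivation $\SB{\mathcal{P}}{a}=\mathcal{P}(a)$ by the Leibniz property of the Schouten bracket; its commutator with $\In_\omega$ is $\pm\In_{\In_{\mathcal{P}(a)}(\omega)}$, again a contraction by a form; and contractions by forms supercommute, so the outermost bracket vanishes. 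Third, your final paragraph is both redundant and, as stated, circular: pushing $\In_\tau$ through the commutator by the Jacobi identity reproduces operators of the very type whose identification you are trying to establish, whereas the induction of your third paragraph, if carried through as an identity of operators on all of $D_*(A)$ rather than only on the top component, already yields $C=\In_{\{\omega,\omega'\}_{\mathcal{P}}}$ everywhere. You should therefore drop the top-component-plus-extension step and let the induction carry the full operator identity, reserving reflexivity solely for the uniqueness half.
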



\begin{proposition}
  The Poisson bracket of forms enjoys the following properties\textup:
  \begin{enumerate}
  \item $\{a,db\}_{\mathcal{P}}=-\{a,b\}_{\mathcal{P}}$\textup{,}
  \item $\{da,db\}_{\mathcal{P}}=d\{a,b\}_{\mathcal{P}}$\textup{,}
  \item $\{\omega,{\omega'}\wedge{\omega''}\}_{\mathcal{P}}
    =\{\omega,{\omega'}\}_{\mathcal{P}}\wedge{\omega''}+
    (-1)^{(j-1)j'}{\omega'}\wedge\{\omega,{\omega''}\}_{\mathcal{P}}$\textup{,}
  \item $\{\omega,\{{\omega'},{\omega''}\}_{\mathcal{P}}\}_{\mathcal{P}} =
    \{\{\omega,{\omega'}\}_{\mathcal{P}},{\omega''}\}_{\mathcal{P}}
    +(-1)^{(j-1)(j'-1)}\{{\omega'},\{\omega,{\omega''}\}_{\mathcal{P}}\}_{\mathcal{P}}$\textup{,}
  \item $\{\omega,{\omega'}\}_{\mathcal{P}}=
    -(-1)^{(j-1)(j'-1)}\{{\omega'},\omega\}_{\mathcal{P}}$\textup{,}
  \item $\Ld_{\{\omega,{\omega'}\}_{\mathcal{P}}}=[\Ld_\omega,\Ld_{\omega'}]$.
  \end{enumerate}
\end{proposition}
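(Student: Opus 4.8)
The plan is to regard this proposition as a concrete instance of the general scheme of Section~\ref{sec:general-scheme}, with $\mathbb{Q}=D_*(A)$ carrying the differential $\partial_{\mathcal{P}}$ and $\phi(\omega)=\In_\omega$, so that $\Ld_\omega^{\mathcal{P}}=[\partial_{\mathcal{P}},\In_\omega]$ is the associated Lie derivative and $\{\omega,\omega'\}_{\mathcal{P}}$ (whose existence is furnished by the preceding proposition) is the $\phi$-bracket. Two structural facts about inner products will be used repeatedly and should be recorded first. The map $\omega\mapsto\In_\omega$ is injective: under the standing assumption that $\Lambda^1(A)$ is projective of finite type, the pairing between $\Lambda^j$ and $D_j(A)$ is nondegenerate, so $\In_\omega=0$ forces $\omega=0$; hence it suffices to verify each identity after applying $\In$. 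Moreover $\In$ sends the wedge product to composition (up to order), from which $[\In_\omega,\In_{\omega'}]=0$ as a graded commutator. Throughout, the relevant grading is the shifted one: $\In_\omega$ has parity $j$ while $\Ld_\omega^{\mathcal{P}}$ has parity $j-1$, and this shift is exactly what produces the exponents $(j-1)j'$ and $(j-1)(j'-1)$ in items (3)--(5).

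The heart of the argument is item (6), the assertion that $\omega\mapsto\Ld_\omega^{\mathcal{P}}$ is a homomorphism. Here I would compute
\begin{equation*}
  \Ld_{\{\omega,\omega'\}_{\mathcal{P}}}^{\mathcal{P}}
  =[\partial_{\mathcal{P}},\In_{\{\omega,\omega'\}_{\mathcal{P}}}]
  =[\partial_{\mathcal{P}},[[\partial_{\mathcal{P}},\In_\omega],\In_{\omega'}]]
\end{equation*}
and expand the outer commutator by the graded Jacobi identity. Using $[\partial_{\mathcal{P}},[\partial_{\mathcal{P}},\In_\omega]]=[\partial_{\mathcal{P}}\circ\partial_{\mathcal{P}},\In_\omega]$, which vanishes precisely because $\partial_{\mathcal{P}}\circ\partial_{\mathcal{P}}=0$ (the Poisson condition $\SB{\mathcal{P}}{\mathcal{P}}=0$), the only surviving term is $[[\partial_{\mathcal{P}},\In_\omega],[\partial_{\mathcal{P}},\In_{\omega'}]]=[\Ld_\omega^{\mathcal{P}},\Ld_{\omega'}^{\mathcal{P}}]$, up to sign bookkeeping. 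This is the one place where the Poisson property is genuinely used, and I expect that the sign accounting in the shifted grading will be the main obstacle.

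Granting (6), items (3)--(5) become formal. For (5) I would apply $\partial_{\mathcal{P}}$ to the relation $[\In_\omega,\In_{\omega'}]=0$ via the graded Jacobi identity; this relates $[\Ld_\omega^{\mathcal{P}},\In_{\omega'}]$ to $[\Ld_{\omega'}^{\mathcal{P}},\In_\omega]$, and injectivity of $\In$ then yields the graded antisymmetry. For (4), applying $\In$ to all three terms and using (6) to rewrite $\In_{\{\{\omega,\omega'\}_{\mathcal{P}},\omega''\}_{\mathcal{P}}}$ as $[[\Ld_\omega^{\mathcal{P}},\Ld_{\omega'}^{\mathcal{P}}],\In_{\omega''}]$ turns the desired Jacobi identity into exactly the graded Jacobi identity for the three operators $\Ld_\omega^{\mathcal{P}}$, $\Ld_{\omega'}^{\mathcal{P}}$, $\In_{\omega''}$; injectivity of $\In$ finishes it. For (3), the graded Leibniz rule for the derivation $[\Ld_\omega^{\mathcal{P}},\cdot\,]$ applied to $\In_{\omega'\wedge\omega''}$, together with the multiplicativity of $\In$ over $\wedge$ and injectivity, yields the stated derivation property with its sign.

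Finally, items (1) and (2) are the normalization/base cases, which I would check directly. Writing $\In_{da}X=X(a)$ for contraction with an exact form and using $\partial_{\mathcal{P}}=\SB{\mathcal{P}}{\cdot}$ together with the base cases of the Schouten bracket (such as $\SB{X}{a}=X(a)$), one computes $\Ld_{da}^{\mathcal{P}}$ explicitly and then evaluates $[\Ld_{da}^{\mathcal{P}},\In_{db}]$ and $[\Ld_a^{\mathcal{P}},\In_{db}]$, matching them against $\In_{d\{a,b\}_{\mathcal{P}}}$ and $-\In_{\{a,b\}_{\mathcal{P}}}$ respectively and invoking injectivity of $\In$. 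These verifications are routine, the only delicate point being, once more, to keep the signs straight.
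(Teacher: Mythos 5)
The paper states this proposition without any proof, so there is no argument of the author's to compare yours against; judged on its own terms, your plan is the natural (and surely the intended) one. You correctly place the bracket inside the general scheme of Section~\ref{sec:general-scheme}, taking $\mathbb{Q}=D_*(A)$ with differential $\partial_{\mathcal{P}}$ and $\phi(\omega)=\In_\omega$, and you identify the right ingredients: injectivity of $\omega\mapsto\In_\omega$ (which follows from reflexivity of the projective finite-type module $\Lambda^*$ and is already tacitly used in the uniqueness claim of the preceding proposition), the relation $[\In_\omega,\In_{\omega'}]=0$ coming from multiplicativity of $\In$ over $\wedge$, the graded Jacobi identity, and $\partial_{\mathcal{P}}\circ\partial_{\mathcal{P}}=0$ as the only place the Poisson condition enters. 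The reductions of (4) to the Jacobi identity for $\Ld_\omega^{\mathcal{P}},\Ld_{\omega'}^{\mathcal{P}},\In_{\omega''}$, of (3) to the Leibniz rule for $[\Ld_\omega^{\mathcal{P}},\cdot\,]$, and of (5) to $[\partial_{\mathcal{P}},[\In_\omega,\In_{\omega'}]]=0$ are all structurally sound.

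The one genuine gap is exactly the sign bookkeeping you defer, and it is not cosmetic: with the most natural parity assignment ($\In_\omega$ of parity $j$, $\partial_{\mathcal{P}}$ of parity $1$), expanding $[\partial_{\mathcal{P}},[[\partial_{\mathcal{P}},\In_\omega],\In_{\omega'}]]$ by the graded Jacobi identity leaves the surviving term $(-1)^{j-1}[\Ld_\omega^{\mathcal{P}},\Ld_{\omega'}^{\mathcal{P}}]$, not $[\Ld_\omega^{\mathcal{P}},\Ld_{\omega'}^{\mathcal{P}}]$, so item (6) as stated requires either the shifted parity convention for $\In_\omega$ or a sign built into the definition of $\Ld^{\mathcal{P}}$; likewise the antisymmetry you extract from $[\In_\omega,\In_{\omega'}]=0$ comes out as $\{\omega,\omega'\}_{\mathcal{P}}=(-1)^{jj'}\{\omega',\omega\}_{\mathcal{P}}$, which differs from item (5) by the factor $(-1)^{j+j'}$. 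Until you fix one consistent convention and recheck (1)--(2) explicitly against it (e.g., via $\Ld_a^{\mathcal{P}}(X)=\SB{\mathcal{P}}{a}\wedge X$ for $a\in A$), what you have is a correct skeleton of the proof rather than a complete verification; the skeleton, however, is the right one.
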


Note that the first three properties may be taken for the constructive
definition of the extended bracket.

\section{Commuting structures}
\label{sec:commuting-structures}

Two Poisson structures $\mathcal{P}$ and $\mathcal{P}'$ \emph{commute}, or are
\emph{compatible} if $\SB{\mathcal{P}}{\mathcal{P}'}=0$. This is equivalent to
\begin{equation*}
  \partial_{\mathcal{P}}\circ\partial_{\mathcal{P}'}=0
\end{equation*}
or to the fact that $\mu\mathcal{P}+\mu'\mathcal{P}'$ (the \emph{Poisson
  pencil}) is a Poisson structure for all $\mu$, $\mu'\in\Bbbk$.
  
The Magri scheme (see Ref~\cite{Magri}) that establishes existence of infinite
series of commuting conservation laws for \emph{bi-Hamiltonian systems} has an
exact algebraic counterpart:

\begin{theorem}
  Let $A$ be an algebra with two commuting Poisson structures ${\mathcal{P}}$
  and ${\mathcal{P}'}$ and assume that $H^1(A;{\mathcal{P}'})=0$. Assume also
  that two elements $a_1$\textup{,} $a_2\in A$ are given\textup{,} such that
  $\partial_{\mathcal{P}}(a_1)=\partial_{{\mathcal{P}'}}(a_2)$. Then\textup{:}
  \begin{enumerate}
  \item There exist elements $a_3,\dots,a_s,\ldots\in A$ satisfying
    \begin{equation*}
      \partial_{\mathcal{P}}(a_s)=\partial_{{\mathcal{P}'}}(a_{s+1}).
    \end{equation*}
  \item All elements $a_1,\dots,a_s,\dots$ are in involution with respect to
    both Poisson structures\textup{,} i.e.\textup{,}
    \begin{equation*}
      \{a_\alpha,a_\beta\}_{\mathcal{P}}=\{a_\alpha,a_\beta\}_{{\mathcal{P}'}}=0
    \end{equation*}
    for all $\alpha$\textup{,} $\beta\ge1$.
  \end{enumerate}
\end{theorem}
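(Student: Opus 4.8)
The plan is to prove the two assertions separately: the first by an induction that rests on the compatibility of the two structures together with the vanishing of $H^1(A;\mathcal P')$, and the second by a purely formal ``staircase'' manipulation of the Poisson brackets. Throughout I write $\{a,b\}_{\mathcal P}=(\partial_{\mathcal P}(a))(b)$, using that $\partial_{\mathcal P}(a)=\SB{\mathcal P}{a}=\mathcal P(a)$ is exactly the Hamiltonian derivation of $a$, and likewise for $\mathcal P'$.

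First I would record the one algebraic fact that drives everything. Since $\SB{\mathcal P}{\mathcal P'}=0$, applying the graded Jacobi identity (property~2 of the Schouten bracket) with $X=\mathcal P$, $X'=\mathcal P'$ gives $\partial_{\mathcal P}\circ\partial_{\mathcal P'}+\partial_{\mathcal P'}\circ\partial_{\mathcal P}=\partial_{\SB{\mathcal P}{\mathcal P'}}=0$, i.e.\ the two differentials \emph{anticommute}; moreover each squares to zero because each of $\mathcal P$, $\mathcal P'$ is a Poisson structure. Now I argue by induction. Suppose $a_1,\dots,a_{s+1}$ have been constructed with $\partial_{\mathcal P}(a_r)=\partial_{\mathcal P'}(a_{r+1})$ for $1\le r\le s$. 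To produce $a_{s+2}$ it suffices to show that the derivation $\partial_{\mathcal P}(a_{s+1})\in D_1(A)$ is $\partial_{\mathcal P'}$-closed:
\[
\partial_{\mathcal P'}\bigl(\partial_{\mathcal P}(a_{s+1})\bigr)=-\partial_{\mathcal P}\bigl(\partial_{\mathcal P'}(a_{s+1})\bigr)=-\partial_{\mathcal P}\bigl(\partial_{\mathcal P}(a_{s})\bigr)=0,
\]
using anticommutation, the inductive relation, and $\partial_{\mathcal P}\circ\partial_{\mathcal P}=0$. A $\partial_{\mathcal P'}$-closed element of $D_1(A)$ is precisely a canonical derivation, hence represents a class in $H^1(A;\mathcal P')=\Can(\mathcal P')/\Ham(\mathcal P')$; since this group vanishes, the derivation is Hamiltonian, so $\partial_{\mathcal P}(a_{s+1})=\partial_{\mathcal P'}(a_{s+2})$ for some $a_{s+2}\in A$. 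The base case $s=1$ is identical, and this proves~(1).

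For the involution, evaluating the recursion $\partial_{\mathcal P}(a_i)=\partial_{\mathcal P'}(a_{i+1})$ at $a_j$ gives the exchange rule $\{a_i,a_j\}_{\mathcal P}=\{a_{i+1},a_j\}_{\mathcal P'}$ for all $i,j\ge1$. Combining this with the skew-symmetry of both brackets yields the staircase identity $\{a_i,a_j\}_{\mathcal P}=\{a_{i+1},a_{j-1}\}_{\mathcal P}$ for $j\ge2$, since $\{a_i,a_j\}_{\mathcal P}=\{a_{i+1},a_j\}_{\mathcal P'}=-\{a_j,a_{i+1}\}_{\mathcal P'}=-\{a_{j-1},a_{i+1}\}_{\mathcal P}=\{a_{i+1},a_{j-1}\}_{\mathcal P}$. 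Hence $\{a_i,a_j\}_{\mathcal P}$ depends only on $i+j$. When $i+j$ is even it equals $\{a_m,a_m\}_{\mathcal P}=0$ with $m=(i+j)/2$; when $i+j$ is odd it equals $\{a_m,a_{m+1}\}_{\mathcal P}$, and the staircase together with skew-symmetry force $\{a_m,a_{m+1}\}_{\mathcal P}=\{a_{m+1},a_m\}_{\mathcal P}=-\{a_m,a_{m+1}\}_{\mathcal P}$, so it vanishes because $\Char\Bbbk\neq2$. Thus all $\mathcal P$-brackets vanish; then $\{a_i,a_j\}_{\mathcal P'}=\{a_{i-1},a_j\}_{\mathcal P}=0$ for $i\ge2$, while the cases $i=1$ reduce by skew-symmetry to $\mathcal P$-brackets already shown to vanish. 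This gives~(2).

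The main obstacle is the closedness step in~(1): one must know that the two homological differentials anticommute. This is \emph{not} the literal reading ``$\partial_{\mathcal P}\circ\partial_{\mathcal P'}=0$'' but the vanishing of their graded anticommutator, which I extract from the Schouten Jacobi identity and $\SB{\mathcal P}{\mathcal P'}=0$. Everything else is bookkeeping: the hypothesis $H^1(A;\mathcal P')=0$ serves precisely to convert ``closed'' into ``exact'' at each step of the recursion, and the assumption $\Char\Bbbk\neq2$ is needed only to kill the odd-weight brackets in the staircase.
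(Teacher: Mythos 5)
The paper states this theorem without proof (it is the algebraic version of the Magri scheme, with a pointer to Magri's paper), so there is nothing to compare against; your argument is correct and is exactly the standard Lenard--Magri induction one would supply. Both halves check out: the anticommutation $\partial_{\mathcal P}\circ\partial_{\mathcal P'}+\partial_{\mathcal P'}\circ\partial_{\mathcal P}=0$ is correctly extracted from the graded Jacobi identity with the sign $(-1)^{(2-1)(2-1)}=-1$, the hypothesis $H^1(A;\mathcal P')=\Can(\mathcal P')/\Ham(\mathcal P')=0$ is used precisely where needed to turn closed into exact, and the staircase computation in part (2), including the use of $\Char\Bbbk\neq2$ to kill the odd-weight brackets, is sound.
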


\section{The Nijenhuis bracket}
\label{sec:nijenh-brack-the}

Consider a form-valued derivation $\Omega\in D_1(\Lambda^k)$ and the Lie
derivative
\begin{equation*}
  \Ld_\Omega=[d,\In_\Omega]\colon\Lambda^j\to\Lambda^{k+j},
\end{equation*}
where $\In_\Omega$ is defined by the composition
\begin{equation*}
  \xymatrix{
    D_1(\Lambda^k)\otimes_A\Lambda^j\ar[r]^{\In}&
    \Lambda^k\otimes_A\Lambda^{j-1}\ar[r]^{\wedge}&\Lambda^{k+j-1}.
  }
\end{equation*}

\begin{proposition}
  The above Lie derivative possesses the following properties\textup{:}
  \begin{enumerate}
  \item $\Ld_\Omega(\omega\wedge\omega')=\Ld_\Omega(\omega)\wedge\omega'
    +(-1)^{kj}\omega\wedge\Ld_\Omega(\omega')$\textup{,}
  \item $[\Ld_\Omega,d]=0$\textup{,}
  \item
    $\Ld_{\omega\wedge\Omega}=\omega\wedge\Ld_\Omega+(-1)^{k+j}d\omega\wedge\In_\Omega$.
  \end{enumerate}
  Here $\omega\in\Lambda^j$\textup{,} $\omega'\in\Lambda^{j'}$\textup{,}
  $\Omega\in D_1(\Lambda^k)$.
\end{proposition}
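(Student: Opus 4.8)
The plan is to treat both $d$ and $\In_\Omega$ as graded derivations of the Grassmann algebra $\Lambda^*$ and to read off all three statements from formal manipulations of their graded commutator $[D,D']=D\circ D'-(-1)^{|D||D'|}D'\circ D$. The two inputs I would use throughout are that $d$ is a graded derivation of degree $1$ with $d^2=0$, and that, by the Remark characterizing $\In_\Omega$ as a super-differential operator of order $1$, the operator $\In_\Omega$ is the graded derivation of $\Lambda^*$ of degree $k-1$ that vanishes on $\Lambda^0=A$ and satisfies $\In_\Omega(da)=\Omega(a)$. I would record this last description first: since $\Lambda^*$ is generated by $A$ together with the exact forms $da$, an order-$1$ operator annihilating $A$ is determined by its values on the $da$, and $\In_\Omega(da)=\Omega(a)$ is read directly off the defining composition $\In(\Omega\otimes da)=\Omega(a)$.

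Property~(1) is then immediate from the elementary lemma that the graded commutator of two graded derivations is again a graded derivation whose degree is the sum of the two degrees; this is a one-line consequence of the two Leibniz rules, which I would not write out. Applied to $d$ (degree $1$) and $\In_\Omega$ (degree $k-1$) it shows that $\Ld_\Omega=[d,\In_\Omega]$ is a graded derivation of degree $k$, i.e.\ exactly the stated rule with sign $(-1)^{kj}$. For property~(2) I would expand $\Ld_\Omega=d\circ\In_\Omega-(-1)^{k-1}\In_\Omega\circ d$ and compute $[\Ld_\Omega,d]$ directly: every surviving term has the shape $d\circ\In_\Omega\circ d$, and they cancel once $d^2=0$ is used. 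Equivalently, the graded Jacobi identity reduces $[[d,\In_\Omega],d]$ to a multiple of $[\In_\Omega,[d,d]]=0$ together with a term forcing $[\Ld_\Omega,d]=-[\Ld_\Omega,d]$, whence the claim since $\Char\Bbbk\neq2$. In either form this is just the assertion that the Lie derivative commutes with the de~Rham differential.

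For property~(3) the key preliminary is the identity $\In_{\omega\wedge\Omega}=\mu_\omega\circ\In_\Omega$, where $\mu_\omega$ denotes left exterior multiplication by $\omega$. I would verify this on generators: from $(\omega\wedge\Omega)(a)=\omega\wedge\Omega(a)$ one gets $\In_{\omega\wedge\Omega}(da)=\omega\wedge\In_\Omega(da)$, and both operators are determined by their values on the $da$. Writing $\Ld_{\omega\wedge\Omega}=[d,\mu_\omega\circ\In_\Omega]$ and applying the graded Leibniz rule for $[d,{-}]$ across the composition — together with $[d,\mu_\omega]=\mu_{d\omega}$ (immediate from the Leibniz rule for $d$) and $[d,\In_\Omega]=\Ld_\Omega$ — produces precisely the two announced contributions $\omega\wedge\Ld_\Omega$ and $d\omega\wedge\In_\Omega$, with the signs $(-1)^{k+j}$ emerging from the sign bookkeeping.

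I expect the principal obstacle to be foundational rather than computational. The whole argument rests on $\In_\Omega$ genuinely being a graded derivation of degree $k-1$, and establishing this cleanly from the terse inductive definition of the inner product — rather than merely checking it on one presentation of a form — is the delicate point on which property~(1) depends. The secondary difficulty is keeping every sign consistent with the convention in which $\Omega\in D_1(\Lambda^k)$ carries grading $k$ whereas the operator $\In_\Omega$ has $\Hom$-degree $k-1$, i.e.\ the shift $\beta=-1$ of the general scheme of Section~\ref{sec:general-scheme}. Once the derivation property and the relation $\In_{\omega\wedge\Omega}=\mu_\omega\circ\In_\Omega$ are secured, the three identities are purely formal.
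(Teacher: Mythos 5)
The paper states this proposition without proof, so your argument can only be judged on its own terms. Your framework is the right one: $\In_\Omega$ is indeed the graded derivation of $\Lambda^*$ of degree $k-1$ that annihilates $A$ and sends $da$ to $\Omega(a)$ (an order-one graded differential operator vanishing on $1$ is automatically a graded derivation, and the value on $da$ is read off the inductive definition of $\In$), and with that in hand items (1) and (2) go through exactly as you describe.

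The gap is in item (3), located precisely at the step you defer to ``sign bookkeeping.'' Carrying out your own computation with the paper's definition $\Ld_\Omega=[d,\In_\Omega]=d\circ\In_\Omega-(-1)^{k-1}\In_\Omega\circ d$, and writing $\mu_\omega$ for left exterior multiplication by $\omega$, one has $\In_{\omega\wedge\Omega}=\mu_\omega\circ\In_\Omega$ (a derivation of degree $j+k-1$), $[d,\mu_\omega]=\mu_{d\omega}$, and the graded Leibniz rule for the commutator gives
\begin{equation*}
  \Ld_{\omega\wedge\Omega}=[d,\mu_\omega\circ\In_\Omega]
  =\mu_{d\omega}\circ\In_\Omega+(-1)^{j}\mu_\omega\circ[d,\In_\Omega]
  =d\omega\wedge\In_\Omega+(-1)^{j}\,\omega\wedge\Ld_\Omega .
\end{equation*}
This is \emph{not} the stated identity $\omega\wedge\Ld_\Omega+(-1)^{k+j}d\omega\wedge\In_\Omega$; for instance the coefficients of $\omega\wedge\Ld_\Omega$ already disagree whenever $j$ is odd. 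A concrete check: take $k=0$, $\Omega=X\in D_1(A)$, $\omega=df$, and evaluate on $g\in A$. The left-hand side is $\Ld_{df\wedge X}(g)=-\In_{df\wedge X}(dg)=-X(g)\,df$, while the right-hand side of (3) gives $df\wedge\Ld_X(g)=+X(g)\,df$. The signs as printed are the ones produced by the opposite commutator order $\Ld_\Omega=[\In_\Omega,d]=(-1)^{k}[d,\In_\Omega]$, which is the Fr\"olicher--Nijenhuis convention used elsewhere in the literature. So you must either identify and fix the convention under which (3) holds, or note that (3) needs a sign correction relative to the definition $\Ld_\Omega=[d,\In_\Omega]$; in either case you cannot simply assert that the announced signs ``emerge from the bookkeeping,'' because under the paper's literal definitions they do not. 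Apart from this unresolved point --- which you yourself half-identify as the delicate one --- the proposal is sound.
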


Other basic properties of the Nijenhuis bracket are presented in the following

\begin{proposition}
  Let $\Omega\in D_1(\Lambda^j)$\textup{,} $\Omega'\in
  D_1(\Lambda^{j'})$\textup{,} $\Omega''\in D_1(\Lambda^{j''})$\textup{,} and
  $\omega\in\Lambda^i$. Then\textup{:}
  \begin{enumerate}
  \item $\NB{\Omega}{\Omega'}+(-1)^{jj'}\NB{\Omega'}{\Omega}=0$\textup{,}\\
  \item $\NB{\Omega}{\NB{\Omega'}{\Omega''}}=
    \NB{\NB{\Omega}{\Omega'}}{\Omega''}+
    (-1)^{jj'}\NB{\Omega'}{\NB{\Omega}{\Omega''}}$\textup{,}\\
  \item $\NB{\Omega}{\omega\wedge\Omega'}=\Ld_\Omega(\omega)\wedge\Omega'+
    (-1)^{ij}\omega\wedge\NB{\Omega}{\Omega'}-
    (-1)^{(j+1)(i+j')}d\omega\wedge\In_{\Omega'}(\Omega)$\textup{,}\\
  \item $[\Ld_{\Omega},\In_{\Omega'}]=(-1)^j\Ld_{\In_{\Omega'}}\Omega+
    \In_{\NB{\Omega}{\Omega'}}$\textup{,}\\
  \item $\In_\Omega\NB{\Omega'}{\Omega''}=\NB{\In_\Omega\Omega'}{\Omega''}+
    (-1)^{(j+1)j'}\NB{\Omega'}{\In_\Omega\Omega''}
    + (-1)^{j'}\In_{\NB{\Omega}{\Omega'}}\Omega''\\-
    (-1)^{(j'+1)j''}\In_{\NB{\Omega}{\Omega''}}\Omega'$.
  \end{enumerate}
\end{proposition}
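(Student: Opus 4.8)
The plan is to reduce everything to operator identities on the de~Rham complex, exploiting that the bracket is \emph{defined} through the general scheme of Section~\ref{sec:general-scheme} by $\Ld_{\NB{\Omega}{\Omega'}}=[\Ld_\Omega,\Ld_{\Omega'}]$ (the instance $\mathbb{Q}=\Lambda^*$, $\phi=\In$, $\gr\phi=-1$, so that $\In_\Omega$ has degree $j-1$ and $\Ld_\Omega$ degree $j$), together with the facts that $\In$ is a monomorphism and that $\Omega\mapsto\Ld_\Omega$ is injective on $D_1(\Lambda^*)$, the latter being exactly what makes the bracket well defined. Hence to prove an identity between multiderivations it suffices to verify the corresponding identity between the associated Lie derivatives (or inner products). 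With this reduction, items~(1) and~(2) are purely formal: (1) is the graded antisymmetry $[\Ld_\Omega,\Ld_{\Omega'}]=-(-1)^{jj'}[\Ld_{\Omega'},\Ld_\Omega]$ of the graded commutator, and (2) is the graded Jacobi identity $[\Ld_\Omega,[\Ld_{\Omega'},\Ld_{\Omega''}]]=[[\Ld_\Omega,\Ld_{\Omega'}],\Ld_{\Omega''}]+(-1)^{jj'}[\Ld_{\Omega'},[\Ld_\Omega,\Ld_{\Omega''}]]$ holding in any associative (here, operator) superalgebra; applying injectivity of $\Ld$ reads off the stated formulas with their signs.

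The heart of the matter is item~(4), the bridge between the inner product and the bracket, from which (3) and (5) will follow. I would begin by expanding $[\Ld_\Omega,\In_{\Omega'}]=[[d,\In_\Omega],\In_{\Omega'}]$ by the graded Jacobi identity into $[d,[\In_\Omega,\In_{\Omega'}]]-(-1)^{j-1}[\In_\Omega,\Ld_{\Omega'}]$. The decisive technical lemma is that the graded commutator of two inner products is again an inner product: writing $\Omega,\Omega'$, via the assumed projectivity and finiteness of $\Lambda^1$, as sums of decomposables $\theta\otimes X$ with $\theta\in\Lambda^*$, $X\in D_1(A)$, so that $\In_\Omega$ acts as $\theta\wedge\iota_X$, the second-order contraction terms $\theta\wedge\theta'\wedge\iota_X\iota_{X'}$ in $\In_\Omega\In_{\Omega'}$ and in $\In_{\Omega'}\In_\Omega$ cancel in the graded commutator, leaving a first-order operator $[\In_\Omega,\In_{\Omega'}]=\In_{\RB{\Omega}{\Omega'}}$, where $\RB{\Omega}{\Omega'}\in D_1(\Lambda^{j+j'-1})$ is the algebraic (Richardson--Nijenhuis) bracket, given up to sign by $\In_\Omega\Omega'-(-1)^{(j-1)(j'-1)}\In_{\Omega'}\Omega$. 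One must be aware that purely formal commutator algebra does not suffice here: substituting the swapped instance of~(4) back into this expansion collapses to the tautology $0=0$, so the precise split of $[\Ld_\Omega,\In_{\Omega'}]$ into the term $(-1)^j\Ld_{\In_{\Omega'}\Omega}$ and the term $\In_{\NB{\Omega}{\Omega'}}$ must be pinned down by evaluating both sides on an arbitrary form and invoking the explicit definitions of $d$ and of $\In$.

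Items~(3) and~(5) are then consequences of~(4). For~(3) I would use that $\In_{\omega\wedge\Omega'}=\omega\wedge\In_{\Omega'}$ (up to sign) on decomposables, compute $\In_{\NB{\Omega}{\omega\wedge\Omega'}}$ from~(4), and distribute the commutator over the wedge by means of property~(3) of the preceding proposition, $\Ld_{\omega\wedge\Omega'}=\omega\wedge\Ld_{\Omega'}+(-1)^{j'+i}d\omega\wedge\In_{\Omega'}$; the $d\omega$ term is precisely what produces the last summand $-(-1)^{(j+1)(i+j')}d\omega\wedge\In_{\Omega'}(\Omega)$, after which injectivity finishes it. Item~(5) is the most laborious: realizing both sides as insertion operators and using $\In$-injectivity, I would rewrite $\In_{\In_\Omega\NB{\Omega'}{\Omega''}}$ through the Key Lemma as a combination of $[\In_\Omega,\In_{\NB{\Omega'}{\Omega''}}]$ and $\In_{\In_{\NB{\Omega'}{\Omega''}}\Omega}$, replace $\In_{\NB{\Omega'}{\Omega''}}$ by its value from~(4), and expand the resulting triple commutators $[\In_\Omega,[\Ld_{\Omega'},\In_{\Omega''}]]$ by the graded Jacobi identity; the four terms on the right-hand side emerge after regrouping with the help of~(2).

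The main obstacle throughout is sign bookkeeping: every step mixes operators of different parities, and the intricate signs of~(3)--(5), such as $(-1)^{(j+1)(i+j')}$ and $(-1)^{(j'+1)j''}$, must be reproduced exactly rather than up to parity. Concretely, the single most delicate point I expect to consume the effort is the Key Lemma---verifying that the second-order contraction terms cancel \emph{with the correct signs}, and thereby fixing the normalization of the algebraic bracket $\RB{\Omega}{\Omega'}$---since this is what underpins both~(4) and~(5); together with the realization that~(4) itself cannot be extracted from commutator identities alone and genuinely requires a direct evaluation on forms.
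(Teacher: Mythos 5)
The paper states this proposition without proof (it is a survey; the only argument written out anywhere in the text is the existence proof for the Schouten bracket), so there is no proof of the author's to compare yours against and I can only judge the proposal on its own terms. Its architecture is sound. Items (1) and (2) do reduce to graded antisymmetry and the graded Jacobi identity for the operators $\Ld_\Omega$, once one notes that $\Omega\mapsto\Ld_\Omega$ is injective (because $\Ld_\Omega(a)=\pm\Omega(a)$ for $a\in A$). Your Key Lemma $[\In_\Omega,\In_{\Omega'}]=\In_{\RB{\Omega}{\Omega'}}$ is exactly the Richardson--Nijenhuis bracket the paper records in Sec.~\ref{sec:more-brackets}, and your cancellation of the second-order contraction terms on decomposables $\theta\wedge X$ is correct. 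You are also right, and commendably honest, that expanding $[[d,\In_\Omega],\In_{\Omega'}]$ by the Jacobi identity only produces a relation symmetric in $\Omega$, $\Omega'$, namely $[\Ld_\Omega,\In_{\Omega'}]\pm[\Ld_{\Omega'},\In_\Omega]=\Ld_{\RB{\Omega}{\Omega'}}$, which is consistent with (4) but cannot isolate its two summands.

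That, however, is also where the genuine gap sits: the decisive step of (4) is deferred to ``evaluating both sides on an arbitrary form,'' which is the entire content of the statement, and since you derive (3) and (5) from (4), nothing downstream is actually established either. The device that closes this gap without brute-force evaluation is the unique decomposition of graded derivations of $\Lambda^*$: any graded derivation $D$ of degree $k$ splits uniquely as $D=\In_L+\Ld_K$, where $K\in D_1(\Lambda^k)$ is read off from $\eval{D}_A$ (the algebraic part $\In_L$ kills $A$, while $\Ld_K(a)=\pm K(a)$), and $L\in D_1(\Lambda^{k+1})$ is read off from the restriction of the $A$-linear operator $D-\Ld_K$ to $\Lambda^1$ via $\Hom_A(\Lambda^1,\Lambda^{k+1})\cong D_1(\Lambda^{k+1})$ --- this is where the standing hypothesis that $\Lambda^1$ is projective of finite type enters. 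Applied to the derivation $[\Ld_\Omega,\In_{\Omega'}]$, a one-line computation on $A$ identifies the Lie part as (a sign times) $\Ld_{\In_{\Omega'}\Omega}$, and applying $[d,\cdot\,]$ together with $[d,\Ld_K]=0$ and the defining relation $\Ld_{\NB{\Omega}{\Omega'}}=[\Ld_\Omega,\Ld_{\Omega'}]$ forces the algebraic part to be $\In$ of (a sign times) $\NB{\Omega}{\Omega'}$; this pins down all the signs you are worried about by finite, checkable computations. With (4) in hand, your routes to (3) (distributing $[\Ld_\Omega,\cdot\,]$ over $\Ld_{\omega\wedge\Omega'}=\omega\wedge\Ld_{\Omega'}+(-1)^{j'+i}d\omega\wedge\In_{\Omega'}$) and to (5) (Jacobi expansion of the triple commutators plus regrouping via (2)) are the right ones, but (5) in particular is still only a plan until the regrouping is written out.
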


On decomposable elements the Nijenhuis bracket acts as follows.  Let
$\omega\in\Lambda^j$, $\omega'\in\Lambda^{j'}$, $X$, $X'\in D_1(A)$. Then
\begin{align*}
  \NB{\omega\wedge X}{\omega'\wedge X'}
  &=\omega\wedge\omega'\wedge[X,X']+\Ld_{\omega\wedge X}(\omega')\wedge X'-
  (-1)^{jj'}\Ld_{\omega'\wedge X'}(\omega)\wedge X\\
  &=\omega\wedge\omega'\wedge[X,X']+\omega\wedge\Ld_X(\omega')\wedge X'-
  \Ld_{X'}(\omega)\wedge\omega'\wedge X\\
  &+(-1)^jd\omega\wedge\In_X(\omega')\wedge X' +(-1)^j\In_{X'}(\omega)\wedge
  d\omega'\wedge X.
\end{align*}


A derivation $\mathcal{N}\in D_1(\Lambda^1)$ is called \emph{integrable} if
\begin{equation*}
  \NB{\mathcal{N}}{\mathcal{N}}=0.
\end{equation*}
With any integrable derivation one can associate a complex
\begin{equation*}
  \xymatrix{
    0\ar[r]&D_1(B)\ar[r]^{\partial_{\mathcal{N}}}&\dots\ar[r]&
    D_1(\Lambda^j(B))\ar[r]^{\partial_{\mathcal{N}}}&D_1(\Lambda^{j+1}(B))\ar[r]&\dots
  }
\end{equation*}
where $\partial_{\mathcal{N}}=\NB{\mathcal{N}}{\cdot}$.

Such structures, in particular, arise in an algebraic model of flat
connections.

\section{Flat connections}
\label{sec:flat-connections}

Let $A$ and $B$ be $\Bbbk$-algebras and $\gamma\colon A\to B$ be a
homomorphism. Then $B$ is an $A$-algebra and one can consider the module
$D_1(A,B)$ of $B$-valued derivations $A\to B$. For any $X\in D_1(B)$ denote by
$\eval{X}_A\in D_1(A,B)$ its restriction to $A$.

A \emph{connection} is a $B$-homomorphism $\nabla\colon D_1(A,B)\to D_1(B)$
such that $\eval{\nabla(X)}_{A}=X$. A vector-valued form $U_\nabla\in D_1(B)$
defined by
\begin{equation*}
  \In_X(U_\nabla)=X-\nabla(\eval{X}_A),\qquad X\in D_1(B),
\end{equation*}
is called the \emph{connection form}. For any two derivations $X$, $X'\in
D_1(A,B)$ we set
\begin{equation*}
  R_\nabla(X,X')=[\nabla(X),\nabla(X')]-\nabla(\nabla(X)\circ X'
  -\nabla(X')\circ X);
\end{equation*}
$R_\nabla$ is the \emph{curvature} of $\nabla$. A connection is \emph{flat} if
$R_\nabla=0$.

\subsection{Nijenhuis cohomologies associated to a connection}
\label{sec:nijenh-cohom-assoc}

\begin{theorem}
  Let $\nabla$ be a connection. Then
  \begin{equation*}
    \In_X(\In_{X'}(\NB{U_\nabla}{U_\nabla}))=2R_\nabla(\eval{X}_A,\eval{X'}_A)
  \end{equation*}
  for any $X$\textup{,} $X'\in D_1(B)$.
\end{theorem}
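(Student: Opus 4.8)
The plan is to read the connection form $U_\nabla\in D_1(\Lambda^1(B))$ as a field of endomorphisms of $D_1(B)$ and to identify the double inner product of its Nijenhuis self-bracket with a Nijenhuis torsion, which is then matched against the curvature. Set $h\colon D_1(B)\to D_1(B)$, $h(X)=\In_X(U_\nabla)=X-\nabla(\eval{X}_A)$, and $g=\id-h$, so that $g(X)=\nabla(\eval{X}_A)$. Since $\eval{\nabla(\xi)}_A=\xi$ for every $\xi\in D_1(A,B)$, we get $\eval{h(X)}_A=0$; hence $g^2=g$, $h^2=h$ and $gh=hg=0$, i.e.\ $h$ is the projection onto the vertical derivations and $U_\nabla$ is precisely the vector-valued $1$-form whose associated endomorphism is $h$.

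First I would prove the purely formal identity, valid for any $U\in D_1(\Lambda^1(B))$ with $h(X)=\In_X(U)$,
\begin{equation*}
  \In_X\bigl(\In_{X'}(\NB{U}{U})\bigr)=2\,T(X,X'),\qquad
  T(X,X')=[hX,hX']-h[hX,X']-h[X,hX']+h^2[X,X'],
\end{equation*}
that is, the double contraction of the self-bracket is twice the Nijenhuis torsion of $h$. Because $\Lambda^1(B)$ is projective of finite type, write $U=\sum_\alpha\omega_\alpha\wedge Y_\alpha$ with $\omega_\alpha\in\Lambda^1(B)$, $Y_\alpha\in D_1(B)$, so that $h(X)=\sum_\alpha\In_X(\omega_\alpha)\,Y_\alpha$. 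Expanding $\NB{U}{U}=\sum_{\alpha,\beta}\NB{\omega_\alpha\wedge Y_\alpha}{\omega_\beta\wedge Y_\beta}$ by $\Bbbk$-bilinearity and feeding each summand into the decomposable-element formula for $\NB{\omega\wedge X}{\omega'\wedge X'}$ established above (with $j=j'=1$), I would then contract twice with $X,X'$, using $\In_X(\omega_\alpha)\in B$ together with $\Ld_X=[d,\In_X]$, the Leibniz rule and $[\Ld,d]=0$ from the first proposition of Section~\ref{sec:nijenh-brack-the}.

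I expect this reorganization to be the main obstacle. The decomposable formula produces, besides the wanted commutator term $\omega\wedge\omega'\wedge[X,X']$, several terms carrying $\Ld_{\omega\wedge X}(\omega')$ and $d\omega\wedge\In_{X}(\omega')$; the crux is to show that after the double contraction by $X$ and $X'$ all of these recombine — with the correct signs and with no residual form-valued contribution surviving — into exactly $-h[hX,X']-h[X,hX']+h^2[X,X']$, thereby also producing the overall factor $2$. (Alternatively, one may avoid decomposables by applying property~(5) of the second proposition of Section~\ref{sec:nijenh-brack-the} with a plain vector field in the contracting slot, which rewrites $\In_X\NB{U}{U}$ through $\NB{hX}{U}$ and $\In_{\NB{X}{U}}U$; but the bookkeeping of signs is essentially the same.)

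With the torsion identity in hand the conclusion is short. Substituting $h=\id-g$ into $T$, the pure $[X,X']$ contributions cancel and one is left with
\begin{equation*}
  T(X,X')=[gX,gX']+g[X,X']-g[gX,X']-g[X,gX'].
\end{equation*}
Put $\xi=\eval{X}_A$, $\eta=\eval{X'}_A$, so $gX=\nabla(\xi)$, $gX'=\nabla(\eta)$. Since $\nabla(\xi)(a)=\xi(a)$ for $a\in A$, a direct evaluation on $A$ gives
\begin{equation*}
  \eval{[X,X']}_A-\eval{[\nabla(\xi),X']}_A-\eval{[X,\nabla(\eta)]}_A
  =-\bigl(\nabla(\xi)\circ\eta-\nabla(\eta)\circ\xi\bigr),
\end{equation*}
whence $g[X,X']-g[gX,X']-g[X,gX']=-\nabla\bigl(\nabla(\xi)\circ\eta-\nabla(\eta)\circ\xi\bigr)$. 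Adding $[gX,gX']=[\nabla(\xi),\nabla(\eta)]$ reproduces precisely $R_\nabla(\xi,\eta)$, so $T(X,X')=R_\nabla(\eval{X}_A,\eval{X'}_A)$ and the factor $2$ from the first step yields the claim.
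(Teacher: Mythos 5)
The paper states this theorem without giving a proof, so there is no argument of the author's to compare yours against; I can only assess your proposal on its own terms. Your strategy is the natural (and surely intended) one: read $U_\nabla$ as the vertical projector $h$, identify the double contraction of $\NB{U_\nabla}{U_\nabla}$ with twice the Nijenhuis torsion
$T(X,X')=[hX,hX']-h[hX,X']-h[X,hX']+h^2[X,X']$, and convert the torsion of a projector into the curvature. The second half of your argument is complete and correct: with $g=\id-h$, $h^2=h$, $g^2=g$, the pure $[X,X']$ terms do cancel and $T$ reduces to $[gX,gX']+g[X,X']-g[gX,X']-g[X,gX']$; evaluating on $a\in A$ and using $X(a)=\xi(a)$, $X'(a)=\eta(a)$ and $\eval{\nabla(\xi)}_A=\xi$ correctly collapses the last three terms to $-\nabla(\nabla(\xi)\circ\eta-\nabla(\eta)\circ\xi)$, which together with $[\nabla(\xi),\nabla(\eta)]$ is exactly the paper's $R_\nabla(\xi,\eta)$.

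The gap is in the first half. The identity $\In_X(\In_{X'}(\NB{U}{U}))=2T(X,X')$ is the crux of the theorem --- it is where the factor $2$ and the compatibility with the paper's sign conventions for $\In$, $\Ld$ and $\NB{\cdot}{\cdot}$ must actually be checked --- and you do not derive it; you describe two plans for deriving it and yourself flag the sign bookkeeping as ``the main obstacle.'' The identity is true (it is the classical Fr\"olicher--Nijenhuis formula for $\In_X\In_{X'}[h,h]$, and it transfers to this algebraic setting because $\Lambda^1(B)$ is projective of finite type, so decomposable $U=\sum_\alpha\omega_\alpha\wedge Y_\alpha$ suffice), and either of your routes will close it: expanding $\NB{\omega_\alpha\wedge Y_\alpha}{\omega_\beta\wedge Y_\beta}$ via the displayed decomposable formula and contracting twice, or applying property (5) of the second proposition of Section~\ref{sec:nijenh-brack-the} with $\Omega=X\in D_1(\Lambda^0)$, which immediately gives $\In_X\NB{U}{U}=2\bigl(\NB{hX}{U}-\In_{\NB{X}{U}}U\bigr)$ and exhibits the factor $2$ at once. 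As written, however, the proof is incomplete at precisely this central computational step; you should carry out one of the two contractions in full, at least on a single decomposable pair, to pin down the signs.
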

Hence, to any flat connection, i.e., to a connection whose curvature vanishes,
we associate a Nijenhuis complex with $\mathcal{N}=U_\nabla$. In applications,
its \emph{vertical} subcomplex is useful:
\begin{equation*}
  \xymatrix{
    0\ar[r]&D_1^v(B)\ar[r]^{\partial_{\mathcal{N}}}&\dots\ar[r]&
    D_1^v(\Lambda^j(B))\ar[r]^{\partial_{\mathcal{N}}}
    &D_1^v(\Lambda^{j+1}(B))\ar[r]&\dots,
  }
\end{equation*}
where $D_1^v(P)=\sd{X\in D_1(P)}{\eval{X}_A=0}$. Denote its cohomologies by
$H^j(B,\nabla)$.

\subsection{Nijenhuis cohomologies: $H^0$, $H^1$, and $H^2$}
\label{sec:nijenh-cohom-h0}

\begin{theorem}
  Let $\nabla$ be a flat connection. Then\textup{:}
  \begin{enumerate}
  \item The cohomology groups~$H^j(B,\nabla)$ inherit the inner product
    operation\textup{,}
    \begin{equation*}
      \In\colon H^j(B,\nabla)\times H^{j'}(B,\nabla)\to H^{j+j'-1}(B,\nabla).
    \end{equation*}
    In particular\textup{,} the group $H^1(B,\nabla)$ is an associative
    algebra represented in endomorphisms of~$H^0(B,\nabla)$\textup{:}
    \begin{gather*}
      \In\colon H^1(B,\nabla)\times H^1(B,\nabla)\to H^1(B,\nabla),\\
      \In\colon H^1(B,\nabla)\times H^0(B,\nabla)\to H^0(B,\nabla).
    \end{gather*}
  \item The cohomology groups~$H^j(B,\nabla)$ inherit the Nijenhuis
    bracket\textup{,}
    \begin{equation*}
      \NB{\cdot\,}{\cdot}\colon H^j(B,\nabla)\times H^{j'}(B,\nabla)\to
      H^{j+j'}(B,\nabla).
    \end{equation*}
    In particular\textup{,} $H^0(B,\nabla)$ is a Lie algebra\textup{:}
    \begin{equation*}
      \NB{\cdot\,}{\cdot}\colon H^0(B,\nabla)\times H^0(B,\nabla)\to
      H^0(B,\nabla).
    \end{equation*}
  \end{enumerate}
\end{theorem}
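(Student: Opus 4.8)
The plan is to realize both operations as descending from chain-level operations for which $\partial_{\mathcal{N}}=\NB{\mathcal{N}}{\cdot}$ (with $\mathcal{N}=U_\nabla$) is a graded derivation; once such Leibniz-type identities are in hand, the descent to cohomology is the usual homological argument (a product of cocycles is a cocycle, and a product of a cocycle with a coboundary is a coboundary), so the operations pass to $H^*(B,\nabla)$. Before that I would record that the complex is well defined: using integrability $\NB{\mathcal{N}}{\mathcal{N}}=0$, the graded Jacobi identity (property~2 of the second Nijenhuis proposition) applied with one slot equal to $\mathcal{N}$ gives $2\,\partial_{\mathcal{N}}^2=0$, hence $\partial_{\mathcal{N}}^2=0$ since $\Char\Bbbk\neq2$. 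I would also verify, from the explicit formula for $\NB{\cdot}{\cdot}$ on decomposable elements and from the definition of $\In$, that both $\NB{\cdot}{\cdot}$ and $\In$ preserve the vertical submodules $D_1^v(\Lambda^*(B))$, so that the induced operations land in the vertical cohomology.

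For part~(2) I would simply read off the Jacobi identity with one argument equal to $\mathcal{N}$, which yields
\[
  \partial_{\mathcal{N}}\NB{\Omega}{\Omega'}=\NB{\partial_{\mathcal{N}}\Omega}{\Omega'}+(-1)^{j}\NB{\Omega}{\partial_{\mathcal{N}}\Omega'},
\]
i.e.\ that $\partial_{\mathcal{N}}$ is a graded derivation of the Nijenhuis bracket. Hence the bracket of two cocycles is a cocycle and the bracket of a cocycle with a coboundary is a coboundary, so $\NB{\cdot}{\cdot}$ descends to $H^j\times H^{j'}\to H^{j+j'}$. Skew-symmetry (property~1) and the graded Jacobi identity survive on cohomology, and restricting to $j=j'=0$ gives the Lie-algebra structure on $H^0(B,\nabla)$.

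For part~(1) the analogous derivation property for $\In$ is the crux. I would start from property~5 of the same proposition relating $\In$ and $\NB{\cdot}{\cdot}$, specialize one argument to $\mathcal{N}$, and use $\NB{\mathcal{N}}{\mathcal{N}}=0$ together with the defining identity $\In_X(U_\nabla)=X-\nabla(\eval{X}_A)$ --- which says that $U_\nabla$ restricts to the identity on the vertical derivations out of which $H^*(B,\nabla)$ is built --- to cancel the correction terms. The outcome should be a rule of the shape $\partial_{\mathcal{N}}(\In_\Omega\Omega')=\In_{\partial_{\mathcal{N}}\Omega}\Omega'\pm\In_\Omega(\partial_{\mathcal{N}}\Omega')$, from which the descent of $\In$ to $H^j\times H^{j'}\to H^{j+j'-1}$ follows exactly as in part~(2). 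Associativity of the induced product on $H^1(B,\nabla)$ and the fact that it acts on $H^0(B,\nabla)$ then follow from the corresponding chain-level identities for the inner product (composition of the $(1,1)$-type objects in $D_1(\Lambda^1)$), which are compatible with passing to cohomology.

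The main obstacle I expect is precisely this inner-product Leibniz rule: property~5, specialized to $\mathcal{N}$, carries extra terms such as $\NB{\In_\Omega\mathcal{N}}{\Omega'}$ and $\In_{\NB{\Omega}{\Omega'}}\mathcal{N}$ that are not manifestly killed by $\NB{\mathcal{N}}{\mathcal{N}}=0$ alone; making them vanish (or become coboundaries) is exactly where the projector nature of $U_\nabla$ must be invoked, and keeping the signs and the verticality bookkeeping straight is the delicate point. By contrast, part~(2) is essentially immediate from the graded Jacobi identity, and the structural consequences ($H^1$ an associative algebra, $H^0$ a Lie algebra) are then formal.
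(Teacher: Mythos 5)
The paper states this theorem without proof (the arguments live in the references \cite{JK:FlConn,JK:SomeNew}), so your proposal can only be judged on its own terms. Your overall strategy is the right one, and part~(2) is essentially complete: the graded Jacobi identity with first argument $\mathcal{N}=U_\nabla$ gives $2\partial_{\mathcal{N}}^2=\NB{\NB{\mathcal{N}}{\mathcal{N}}}{\cdot}=0$ and the Leibniz rule
\begin{equation*}
  \partial_{\mathcal{N}}\NB{\Omega}{\Omega'}=\NB{\partial_{\mathcal{N}}\Omega}{\Omega'}+(-1)^{j}\NB{\Omega}{\partial_{\mathcal{N}}\Omega'},
\end{equation*}
while verticality of $\NB{\Omega}{\Omega'}$ and of $\In_\Omega\Omega'$ is immediate from $\Omega(\gamma(a))=\Omega'(\gamma(a))=0$, so the descent to cohomology and the Lie structure on $H^0(B,\nabla)$ are indeed formal.

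The gap is in part~(1), and you flag it yourself: you never derive the Leibniz rule for the inner product, you only describe what ``the outcome should be'' and note that the correction terms in property~5 are not manifestly killed. Since that derivation \emph{is} the content of part~(1), the proof is incomplete as written. What is missing is one concrete lemma: for every vertical $\Xi\in D_1^v(\Lambda^j(B))$ one has $\In_\Xi\mathcal{N}=\Xi$. This follows from $\In_XU_\nabla=X-\nabla(\eval{X}_A)=X$ for $X\in D_1^v(B)$ together with the decomposition $D_1^v(\Lambda^j)\cong\Lambda^j\otimes_BD_1^v(B)$ (available under the standing projectivity assumption): on $\omega\wedge X$ one computes $(\In_{\omega\wedge X}\mathcal{N})(b)=\omega\wedge\In_X(U_\nabla(b))=\omega\wedge X(b)$. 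Granting this, property~5 with $(\Omega,\Omega',\Omega'')=(\Theta,\mathcal{N},\Theta')$, $\Theta\in D_1^v(\Lambda^j)$, reads
\begin{equation*}
  \In_\Theta(\partial_{\mathcal{N}}\Theta')=\NB{\In_\Theta\mathcal{N}}{\Theta'}
  +(-1)^{j+1}\partial_{\mathcal{N}}(\In_\Theta\Theta')
  -\In_{\NB{\Theta}{\mathcal{N}}}\Theta'-\In_{\NB{\Theta}{\Theta'}}\mathcal{N};
\end{equation*}
the lemma turns the first and last summands into $\NB{\Theta}{\Theta'}$ and $-\NB{\Theta}{\Theta'}$, which cancel, and antisymmetry gives $\In_{\NB{\Theta}{\mathcal{N}}}\Theta'=-(-1)^{j}\In_{\partial_{\mathcal{N}}\Theta}\Theta'$, so that
\begin{equation*}
  \partial_{\mathcal{N}}(\In_\Theta\Theta')=\In_{\partial_{\mathcal{N}}\Theta}\Theta'
  +(-1)^{j+1}\In_\Theta(\partial_{\mathcal{N}}\Theta').
\end{equation*}
With this identity the descent of $\In$ to $H^j\times H^{j'}\to H^{j+j'-1}$ and the associativity statements for $H^1(B,\nabla)$ go through exactly as you indicate; without it, part~(1) remains an announcement rather than a proof.
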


\subsection{Application to differential equations}
\label{sec:nijenh-cohom-appl}

Let $\mathcal{E}\subset J^\infty(\pi)$ be an infinitely prolonged differential
equation in the jet bundle of a bundle $\pi\colon E\to M$. The bundle
$\pi_\infty\colon\mathcal{E}\to M$ is always endowed with a natural flat
connection $\mathcal{C}$ (the \emph{Cartan connection}, see
Refs~\cite{B-Ch-D-AMS,KLV}) and taking
\begin{equation*}
  \gamma=\pi_\infty^*\colon A=C^\infty(M)\to B=C^\infty(\mathcal{E})
\end{equation*}
we obtain the picture considered above.

Let us use the notation $H^j(\mathcal{E},\mathcal{C})$ for the cohomology
groups arising in this case.

\subsection{The main result}
\label{sec:appl-diff-equat}

\begin{theorem}
  For any formally integrable equation $\mathcal{E}$ that surjectively
  projects to $J^0(\pi)$ one has\textup{:}
  \begin{enumerate}
  \item $H^0(\mathcal{E},\mathcal{C})$ coincides with the Lie algebra
    $\sym\mathcal{E}$ of higher symmetries of~$\mathcal{E}$.
  \item Elements of $H^1(\mathcal{E},\mathcal{C})$ act on $\sym\mathcal{E}$
    and thus are identified with recursion operators for symmetries.
  \item On the other hand\textup{,} elements of $H^1(\mathcal{E},\mathcal{C})$
    can be understood as classes of nontrivial infinitesimal deformations of
    the equation structure.
  \item $H^2(\mathcal{E},\mathcal{C})$ contains obstructions to prolongation
    of infinitesimal deformations up to formal ones.
  \end{enumerate}
\end{theorem}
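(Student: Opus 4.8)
The plan is to specialize the two preceding theorems to the pair $A=C^\infty(M)$, $B=C^\infty(\mathcal{E})$ with $\nabla=\mathcal{C}$ the Cartan connection, and then match each cohomology group against its classical geometric meaning. Throughout I set $\mathcal{N}=U_{\mathcal{C}}\in D_1(\Lambda^1(B))$, so that $\partial_{\mathcal{N}}=\NB{\mathcal{N}}{\cdot}$ is the differential of the vertical Nijenhuis complex whose cohomology is $H^j(\mathcal{E},\mathcal{C})$. Flatness of $\mathcal{C}$ for a formally integrable equation guarantees, via the curvature theorem, that $\NB{\mathcal{N}}{\mathcal{N}}=0$, so the complex is well-defined and $\mathcal{N}$ is integrable. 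The role of the hypotheses (formal integrability and surjectivity onto $J^0(\pi)$) is to ensure that $\mathcal{E}$ is a genuine infinitely prolonged equation with surjective projections, so that $D_1^v(B)=\sd{X\in D_1(B)}{\eval{X}_A=0}$ is exactly the module of $\pi_\infty$-vertical derivations on $\mathcal{E}$ and the Cartan connection is available and flat.

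For statement~1 I would first identify $D_1^v(B)$ with the $\pi_\infty$-vertical fields on $\mathcal{E}$, and then show that the cocycle condition $\partial_{\mathcal{N}}X=\NB{\mathcal{N}}{X}=0$ cuts out precisely those vertical fields that preserve the Cartan distribution, i.e.\ the higher symmetries. The cleanest route is to linearize the curvature identity $\In_X(\In_{X'}(\NB{\mathcal{N}}{\mathcal{N}}))=2R_{\mathcal{C}}(\eval{X}_A,\eval{X'}_A)$: the contraction $\In_Y(\NB{\mathcal{N}}{X})$ measures the failure of $X$ to commute with the horizontal lift $\nabla$, hence the vanishing $\NB{\mathcal{N}}{X}=0$ is exactly the infinitesimal preservation of $\mathcal{C}$. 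The Lie-algebra structure on $H^0$ then comes for free from part~2 of the bracket-inheritance theorem and is matched to the commutator of symmetries.

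For statements~2 and~3 I would read off $H^1$ in its two guises directly from the two preceding theorems. A class in $H^1$ is represented by a vertical vector-valued one-form $R\in D_1^v(\Lambda^1(B))$ with $\partial_{\mathcal{N}}R=0$; the inherited inner product $\In\colon H^1(B,\nabla)\times H^0(B,\nabla)\to H^0(B,\nabla)$ then sends a symmetry $X$ to another symmetry, so $R$ acts as a recursion operator, and associativity of this action follows from part~1 of the inner-product theorem. For the deformation picture, I would regard $\mathcal{N}$ as encoding the equation structure and consider the family $\mathcal{N}_\varepsilon=\mathcal{N}+\varepsilon R$: since the bracket is symmetric on degree-one elements, the order-$\varepsilon$ part of $\NB{\mathcal{N}_\varepsilon}{\mathcal{N}_\varepsilon}=0$ is exactly $\NB{\mathcal{N}}{R}=0$, i.e.\ $R$ is a cocycle, while gauge deformations produce the coboundaries $\partial_{\mathcal{N}}S$; hence nontrivial infinitesimal deformations are classified by $H^1$.

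Statement~4 is the standard obstruction computation. Extending to second order, $\mathcal{N}_\varepsilon=\mathcal{N}+\varepsilon R+\varepsilon^2 S$, the order-$\varepsilon^2$ term of $\NB{\mathcal{N}_\varepsilon}{\mathcal{N}_\varepsilon}=0$ reads $2\partial_{\mathcal{N}}S+\NB{R}{R}=0$, so $S$ exists iff $\NB{R}{R}$ is exact; applying the graded Jacobi identity (part~2 of the Nijenhuis-bracket proposition) together with $\partial_{\mathcal{N}}R=0$ one checks that $\partial_{\mathcal{N}}\NB{R}{R}=0$, so $[\NB{R}{R}]\in H^2(\mathcal{E},\mathcal{C})$ is the obstruction and it vanishes precisely when the extension exists. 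I expect the main difficulty to be statement~1: translating the purely algebraic cocycle condition $\NB{\mathcal{N}}{X}=0$ into the geometric statement that $X$ is a higher symmetry, which requires careful passage through the infinite-jet limit and genuine use of the formal integrability and $J^0$-surjectivity hypotheses, whereas statements~2--4 are then largely formal consequences of the bracket and inner-product structures already established.
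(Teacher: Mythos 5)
The paper states this theorem without any proof --- it is a survey item imported from \cite{JK:FlConn,JK:SomeNew} --- so there is no in-text argument to compare yours against; what follows assesses your outline on its own terms. Your strategy is the right one and coincides with that of the cited sources: specialize the two preceding theorems (the curvature identity for $U_\nabla$ and the inheritance of $\In$ and $\NB{\cdot\,}{\cdot}$ by $H^*(B,\nabla)$) to $A=C^\infty(M)$, $B=C^\infty(\mathcal{E})$, $\nabla=\mathcal{C}$, and then interpret cocycles and coboundaries geometrically. Your treatment of items 2--4 is formally sound: in bidegree $(1,1)$ the bracket is symmetric, so the first-order term of $\NB{\mathcal{N}_\varepsilon}{\mathcal{N}_\varepsilon}=0$ is $2\partial_{\mathcal{N}}R$, the second-order term is $2\partial_{\mathcal{N}}S+\NB{R}{R}$, and the graded Jacobi identity gives $\partial_{\mathcal{N}}\NB{R}{R}=\NB{\NB{\mathcal{N}}{R}}{R}-\NB{R}{\NB{\mathcal{N}}{R}}=0$ for a cocycle $R$. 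The one place where the sketch is genuinely incomplete --- and you flag it yourself --- is item 1: the equality $H^0(\mathcal{E},\mathcal{C})=\sym\mathcal{E}$ is not a formal consequence of the algebraic machinery. It requires (i) identifying the algebraic module $D_1^v(C^\infty(\mathcal{E}))$ with vertical vector fields, i.e.\ with evolutionary derivations given by generating sections, which is precisely where formal integrability and surjectivity onto $J^0(\pi)$ enter (they guarantee that $C^\infty(\mathcal{E})$ is a filtered smooth algebra for which the functors $D_1$ and $\Lambda^1$ retain their geometric meaning), and (ii) a direct computation of $\In_Y\bigl(\NB{U_{\mathcal{C}}}{X}\bigr)$ for vertical $X$ showing that its vanishing is equivalent to $[X,\mathcal{C}Y]$ lying in the Cartan distribution for all $Y$; this is a separate, linear identity, not obtainable by ``linearizing'' the quadratic curvature formula you quote. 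With those two lemmas supplied, your argument closes, and item 2 then also needs the small check that $R\mapsto\In_{(\cdot)}R$ reproduces the classical notion of recursion operator on generating sections.
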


\subsection{Commutative hierarchies}
\label{sec:comm-hier}

Let $(B,\nabla)$ be an algebra with flat connection. For $X=X_0\in
H^0(B,\nabla)$ and $R\in H^1(B,\nabla)$, use the notation $R(X)=\In_X(R)$,
$X_n=R^n(X)$, $n=0,1,2\dots$

\begin{theorem}
  Assume that $H^2(B,\nabla)=0$. Then for any $X$\textup{,} $Y\in
  H^0(B,\nabla)$ and $R\in H^1(B,\nabla)$ for all $m$\textup{,}
  $n\in\mathbb{Z}_+$ one has
  \begin{equation*}
    [X_m,Y_n]=[X,Y]_{m+n}+\sum_{i=0}^{n-1}(\NB{X}{R}(Y_i))_{m+n-i-1}-
    \sum_{j=0}^{m-1}(\NB{Y}{R}(X_j))_{m+n-j-1}.
  \end{equation*}
\end{theorem}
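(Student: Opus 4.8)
The plan is to work entirely inside the cohomology $H^*(B,\nabla)$, where, by the theorems of Sec.~\ref{sec:nijenh-cohom-h0}, both the inner product and the Nijenhuis bracket are defined, $H^1(B,\nabla)$ acts associatively on $H^0(B,\nabla)$ via $\In$, and on degree-zero classes $\NB{\cdot}{\cdot}$ coincides with the commutator $[\cdot\,,\cdot]$. The single place where the hypothesis $H^2(B,\nabla)=0$ enters is the observation that $\NB{R}{R}\in H^2(B,\nabla)$ vanishes, so that $R$ behaves as a torsion-free (Nijenhuis) operator on cohomology. Since every identity between $\In$ and $\NB{\cdot}{\cdot}$ descends to $H^*(B,\nabla)$, I may freely use the fifth identity of the last proposition of Sec.~\ref{sec:nijenh-brack-the} at the cohomology level. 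The strategy is to produce two one-step recursions, one lowering $m$ and one lowering $n$, and then run a double induction whose index bookkeeping reproduces the two telescoping sums.

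First I would specialize that Leibniz-type identity $\In_\Omega\NB{\Omega'}{\Omega''}=\NB{\In_\Omega\Omega'}{\Omega''}+(-1)^{(j+1)j'}\NB{\Omega'}{\In_\Omega\Omega''}+(-1)^{j'}\In_{\NB{\Omega}{\Omega'}}\Omega''-(-1)^{(j'+1)j''}\In_{\NB{\Omega}{\Omega''}}\Omega'$ to $\Omega=X_{m-1}$, $\Omega'=R$, $\Omega''=Y_n$. Here two of the four right-hand terms vanish because each would contract a derivation into the $0$-form--valued $Y_n$; using $\In_{X_{m-1}}R=X_m$ and the skew-symmetry $\NB{R}{Y_n}=-\NB{Y_n}{R}$ one is left with
\[
  [X_m,Y_n]=R([X_{m-1},Y_n])-\NB{Y_n}{R}(X_{m-1}).
\]
Applying this with $X$ and $Y$ interchanged and using antisymmetry of $[\cdot\,,\cdot]$ gives the mirror recursion $[X_m,Y_n]=R([X_m,Y_{n-1}])+\NB{X_m}{R}(Y_{n-1})$. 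The case $n=0$ of the theorem then follows by a straightforward induction on $m$ from the first recursion, the empty $i$-sum and the single $j$-sum appearing automatically.

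The heart of the argument is to convert the defect $\NB{X_m}{R}$ of the mirror recursion into $\NB{X}{R}$. For this I would apply the same Leibniz identity with $\Omega=X_{m-1}$, $\Omega'=\Omega''=R$: now $\In_{X_{m-1}}\NB{R}{R}=0$ since $\NB{R}{R}=0$ in $H^2(B,\nabla)$, the two $\In_{\NB{X_{m-1}}{R}}R$ terms coincide, and after cancelling the factor $2$ (legitimate because $\Char\Bbbk\neq2$) one gets $\NB{X_m}{R}=\In_{\NB{X_{m-1}}{R}}R$, hence $\NB{X_m}{R}=R^m(\NB{X}{R})$ by induction. Combining this with the associativity of $\In$ on $H^*(B,\nabla)$ yields $\NB{X_m}{R}(Y_{n-1})=(\NB{X}{R}(Y_{n-1}))_m$, which is precisely the $i=n-1$ term of the desired formula. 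Feeding this into the mirror recursion and applying $R$ to the induction hypothesis at $n-1$ (which raises every hierarchy index by one) completes the induction on $n$. The main obstacle is not any single identity but the disciplined tracking of the hierarchy indices: one must verify that applying $R$ shifts the three patterns $m+n-1-i-1$, $m+n-1-j-1$, $m+n-1$ uniformly to $m+n-i-1$, $m+n-j-1$, $m+n$, and that the newly produced defect term lands exactly at $i=n-1$.
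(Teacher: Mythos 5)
The paper states this theorem without proof (it is a survey; the argument lives in the cited originals), so there is no in-text proof to compare against. Judged on its own, your proposal is correct and is essentially the standard argument. The two one-step recursions $[X_m,Y_n]=R([X_{m-1},Y_n])-\NB{Y_n}{R}(X_{m-1})$ and its mirror are exactly what item (5) of the Nijenhuis-bracket proposition yields for $\Omega=X_{m-1}$, $\Omega'=R$, $\Omega''=Y_n$ once the two contractions into the $0$-form--valued argument are discarded; and your identification of the sole role of $H^2(B,\nabla)=0$ --- forcing $\NB{R}{R}=0$ and hence, via the same identity with $\Omega'=\Omega''=R$ and division by $2$, the relation $\NB{X_m}{R}=\In_{\NB{X_{m-1}}{R}}R=R^m\circ\NB{X}{R}$ --- is precisely the point of the hypothesis. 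The index bookkeeping in the double induction checks out: applying $R$ to the formula at $(m,n-1)$ shifts all subscripts by one, and the defect term $\NB{X_m}{R}(Y_{n-1})=(\NB{X}{R}(Y_{n-1}))_m$ is exactly the missing $i=n-1$ summand. The only caveat worth recording is that the paper's conventions for the extended inner products $\In\colon D_1(\Lambda^i)\otimes_A D_1(\Lambda^k)\to D_1(\Lambda^{i+k-1})$ are stated only loosely, so the vanishing of the two contraction terms and the identification $\In_{S}R=R\circ S$ rest on the standard Fr\"olicher--Nijenhuis interpretation rather than on anything spelled out in the text; with that understanding the proof is complete.
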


\begin{corollary}
  If $\NB{X}{R}=\NB{Y}{R}=0$ and $[X,Y]=0$ then $[X_m,Y_n]=0$ for all
  $m$\textup{,} $n\in\mathbb{Z}_+$.
\end{corollary}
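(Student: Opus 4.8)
The plan is to read the corollary off directly from the master commutator formula of the preceding theorem, since each of the three hypotheses is tailored to annihilate one of the three contributions on its right-hand side. I would not attempt any independent argument: the entire content is a substitution into that formula, so the only thing to verify is that each vanishing hypothesis genuinely propagates through the two operations appearing there, namely the subscript $(\cdot)_k$ and the inner product action $\In$.

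First I would recall that, by the theorem on $H^0$, $H^1$ and $H^2$, both the inner product $\In$ and the Nijenhuis bracket $\NB{\cdot}{\cdot}$ descend to cohomology. Consequently $R(Z)=\In_Z(R)$ is a well-defined $\Bbbk$-linear endomorphism of $H^0(B,\nabla)$, and the subscript $Z_k=R^k(Z)$ is simply its $k$-fold iterate; in particular $R^k(0)=0$ for every $k$. With this in hand the first term $[X,Y]_{m+n}=R^{m+n}([X,Y])$ vanishes at once from the hypothesis $[X,Y]=0$.

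Next I would dispatch the two sums. For the first, the hypothesis $\NB{X}{R}=0$ forces each summand to vanish: since $\NB{X}{R}\in H^1(B,\nabla)$ acts on $Y_i\in H^0(B,\nabla)$ by $\NB{X}{R}(Y_i)=\In_{Y_i}(\NB{X}{R})=\In_{Y_i}(0)=0$, applying the remaining $R^{m+n-i-1}$ yields zero again. Symmetrically, $\NB{Y}{R}=0$ kills every summand of the second sum. Hence the whole right-hand side of the master formula collapses and $[X_m,Y_n]=0$ for all $m,n\in\mathbb{Z}_+$.

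I expect no genuine obstacle here: the corollary is a formal consequence of the theorem. The single point deserving a line of care is the bookkeeping that $\In$ and the bracket really are the inherited cohomological operations, so that ``zero in cohomology'' is what is being substituted; but this is precisely what the hypothesis $H^2(B,\nabla)=0$ guarantees in order for the master formula to hold at all.
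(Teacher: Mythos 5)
Your proposal is correct and coincides with the paper's (implicit) argument: the corollary is read off by substituting the three hypotheses into the commutator formula of the preceding theorem, each hypothesis killing one of the three terms by linearity of $R^k$ and of the inner product action. The paper offers no separate proof precisely because this substitution is the whole content.
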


\begin{remark}
  If $\mathcal{E}$ is a scalar evolutionary equation of order $>1$ then
  $H^2(\mathcal{E},\mathcal{C})=0$.
\end{remark}

\subsection{Bi-complex}
\label{sec:bi-complex}

Let $\mathcal{N}\in D_1(\Lambda^1)$ be an integrable element, i.e.,
$\NB{\mathcal{N}}{\mathcal{N}}=0$. Then the operator
\begin{equation*}
  d_{\mathcal{N}}=\Ld_{\mathcal{N}}\colon\Lambda^j\to\Lambda^{j+1}
\end{equation*}
is a differential: $d_{\mathcal{N}}\circ d_{\mathcal{N}}=0$. Moreover, one has
\begin{equation*}
  [d,d_{\mathcal{N}}]=0
\end{equation*}
and consequently the pair $(d_{\mathcal{N}},\bar{d}_{\mathcal{N}})$, where
$\bar{d}_{\mathcal{N}}=d-d_{\mathcal{N}}$, constitutes a bi-complex that
converges to the de~Rham cohomologies of~$B$.

In the case of differential equations ($A=C^\infty(M)$,
$B=C^\infty(\mathcal{E})$, and $\mathcal{N}$ is the connection form of the
Cartan connection in the bundle $\pi_\infty\colon \mathcal{E}^\infty\to M$),
this bi-complex coincides with the \emph{variational bi-complex}, or
\emph{Vinogradov's $\mathcal{C}$-spectral sequence},
see~Ref.~\cite{V,Vin:C-Spec}.

\section{More brackets\dots}
\label{sec:more-brackets}

To conclude, note that several more brackets can be constructed in a similar
way.
\begin{enumerate}
\item First, mention the \emph{Nijenhuis--Richardson bracket}
  \begin{equation*}
    \RB{\cdot\,}{\cdot}\colon D_1(\Lambda^j)\otimes_A
    D_1(\Lambda^{j'})\to D_1(\Lambda^{j+j'-1})
  \end{equation*}
  that can be defined by
  \begin{equation*}
    [\In_\Omega,\In_{\Omega'}]=\In_{\RB{\Omega}{\Omega'}}
  \end{equation*}
  and is of the form
  \begin{equation*}
    \RB{\Omega}{\Omega'}=\In_\Omega(\Omega')-
    (-1)^{(i-1)(j-1)}\In_{\Omega'}(\Omega)
  \end{equation*}
  and is one of the classical and well known brackets.
\end{enumerate}

Two more brackets arise also if we fix a Poisson structure $\mathcal{P}\in
D_2(A)$ or a Nijenhuis structure $\mathcal{N}\in D_1(\Lambda)$:
\begin{enumerate}
\item[2.] Consider the inner product
  \begin{equation*}
    \In\colon D_i(\Lambda^1)\otimes_A D_k(A)\to D_{i+k-1}(A).
  \end{equation*}
  Then for $\Omega\in D_i(\Lambda^1)$ the following ``Lie derivative'' arises:
  \begin{equation*}
    \Ld_\Omega^{\mathcal{P}}=[\partial_{\mathcal{P}},\In_\Omega]\colon
    D_k(A)\to D_{k+i}(A)
  \end{equation*}
  and one can introduce a bracket
  \begin{equation*}
    \ldb\cdot\,,\cdot\rdb^{\mathcal{P}}\colon D_i(\Lambda^1)\times
    D_{i'}(\Lambda^1)\to D_{i+i'}(\Lambda^1)
  \end{equation*}
  by
  \begin{equation*}
    \Ld_{\ldb\Omega,\Omega'\rdb^{\mathcal{P}}}^{\mathcal{P}}=
    [\Ld_\Omega^{\mathcal{P}},\Ld_{\Omega'}^{\mathcal{P}}].
  \end{equation*}
\end{enumerate}


\begin{enumerate}
\item[3.] In a similar way, one can consider the inner product
  \begin{equation*}
    \In\colon D_1(\Lambda^i)\otimes_A D_1(\Lambda^k)
    \to D_1(\Lambda^{i+k-1})
  \end{equation*}
  and the ``Lie derivative''
  \begin{equation*}
    \Ld_\Omega^{\mathcal{N}}=[\partial_{\mathcal{N}},\In_\Omega]\colon
    D_1(\Lambda^k)\to D_1(\Lambda^{k+i}),
  \end{equation*}
  $\Omega\in D_1(\Lambda^i)$. Then a new bracket
  \begin{equation*}
    \ldb\cdot\,,\cdot\rdb^{\mathcal{N}}\colon D_1(\Lambda^i)\times
    D_1(\Lambda^{i'})\to D_1(\Lambda^{i+i'})
  \end{equation*}
  is defined by
  \begin{equation*}
    \Ld_{\ldb\Omega,\Omega'\rdb^{\mathcal{N}}}^{\mathcal{N}}=
    [\Ld_\Omega^{\mathcal{N}},\Ld_{\Omega}^{\mathcal{N}}].
  \end{equation*}
\end{enumerate}

\section{\dots and when brackets fail to arise}
\label{sec:when-brackets-fail}

One can also define the inner products
\begin{equation*}
  \In\colon D_i(\Lambda^j)\otimes_A\Lambda^k\to\Lambda^{k+j-i}
\end{equation*}
and
\begin{equation*}
  \In\colon D_i(\Lambda^j)\otimes_A D_k(A)\to D_{k-j+i}(A)
\end{equation*}
together with the corresponding Lie actions
\begin{equation*}
  \Ld_\Omega=[d,\In_\Omega]\colon\Lambda^k\to\Lambda^{k+j-i+1}
\end{equation*}
and
\begin{equation*}
  \Ld_\Omega^{\mathcal{P}}=[\partial_{\mathcal{P}},\In_\Omega]\colon
  D_k(A)\to D_{k-j+i+1}(A),
\end{equation*}
where $\Omega\in D_i(\Lambda^j)$.  Of course, it is tempting to find the
elements $\ldb\Omega,\Omega'\rdb$ and $\ldb\Omega,\Omega'\rdb^{\mathcal{P}}$
such that
\begin{equation*}
  \Ld_{\ldb\Omega,\Omega'\rdb}=[\Ld_\Omega,\Ld_{\Omega'}],\qquad
  \Ld_{\ldb\Omega,\Omega'\rdb^{\mathcal{P}}}^{\mathcal{P}}=
  [\Ld_\Omega^{\mathcal{P}},\Ld_{\Omega'}^{\mathcal{P}}],
\end{equation*}
but in general such elements do not exist (see discussion of these matters in
Ref.~\cite{Vin:Unification}).

\end{document}